\def\mystrut{\scalebox{1.4}{\strut}}
\newtheorem*{corollary*}{Corollary}
\newtheorem{theorem}{Theorem}[section]
\newtheorem{theorem*}{Theorem}
\newtheorem{corollary}[theorem]{Corollary}
\newtheorem{lemma}[theorem]{Lemma}
\newtheorem{proposition}[theorem]{Proposition}
\newtheorem*{claim*}{Claim}
\newtheorem*{conjecture}{Conjecture}
\newtheorem*{Theorem A}{Theorem A}
\newtheorem*{Theorem B}{Theorem B}
\newtheorem*{Theorem C}{Theorem C}
\theoremstyle{definition}
\newtheorem{definition}[theorem]{Definition}
\newtheorem{remark}[theorem]{Remark}
\newtheorem{example}[theorem]{Example}
\theoremstyle{remark}
\numberwithin{equation}{theorem}
\renewcommand*\env@matrix[1][\
arraystretch]{%
  \edef\arraystretch{#1}%
  \hskip -\arraycolsep
  \let\@ifnextchar\new@ifnextchar
  \array{*\c@MaxMatrixCols c}}
\renewcommand{\mod}{\operatorname{mod}}
\newcommand{\proj}{\operatorname{proj}}
\newcommand{\inj}{\operatorname{inj}}
\newcommand{\Ext}{\operatorname{Ext}}
\newcommand{\End}{\operatorname{End}}
\newcommand{\Hom}{\operatorname{Hom}}
\newcommand{\add}{\operatorname{\mathrm{add}}}
\newcommand{\domdim}{\operatorname{\mathrm{domdim}}}
\newcommand{\Dom}{\operatorname{\mathrm{Dom}}}
\newcommand{\Codom}{\operatorname{\mathrm{Codom}}}
\newcommand{\Gdim}{\operatorname{\mathrm{Gdim}}}
\newcommand{\fdomdim}{\operatorname{\mathrm{fdomdim}}}
\newcommand{\idim}{\operatorname{\mathrm{idim}}}
\newcommand{\Tr}{\operatorname{\mathrm{Tr}}}
\newcommand{\Gp}{\operatorname{\mathrm{Gp}}}
\newcommand{\Gi}{\operatorname{\mathrm{Gi}}}
\newcommand{\Gpd}{\operatorname{\mathrm{Gpd}}}
\newcommand{\Gpi}{\operatorname{\mathrm{Gpi}}}
\newcommand{\codomdim}{\operatorname{\mathrm{codomdim}}}
\begin{document}

\title{Gendo-symmetric algebras, dominant dimensions and Gorenstein homological algebra}
\date{\today}

\dedicatory{Dedicated to the memory of Leni}

\subjclass[2010]{Primary 16G10, 16E10}

\keywords{weakly Gorenstein algebras, gendo-symmetric algebras, Gorenstein-projective modules, dominant dimension}

\author{Ren\'{e} Marczinzik}
\address{Institute of algebra and number theory, University of Stuttgart, Pfaffenwaldring 57, 70569 Stuttgart, Germany}
\email{marczire@mathematik.uni-stuttgart.de}

\begin{abstract}
We prove that a finite dimensional algebra $A$ with representation-finite subcategory consisting of modules that are semi-Gorenstein-projective and $n$-th syzygy modules is left weakly Gorenstein.
This generalises a theorem of Ringel and Zhang who proved the result in the case $n=1$.
As an application we show that monomial algebras and endomorphism rings of modules over representation-finite algebras are weakly Gorenstein.
We then give a new connection between the theory of dominant dimension and Gorenstein homological algebra for gendo-symmetric algebras. As an application, we will see that the existence of a non-projective Gorenstein-projective-injective module in a gendo-symmetric algebra already implies that this algebra is not CM-finite.
We apply out methods to give a first systematic construction of non-weakly Gorenstein algebras using the theory of gendo-symmetric algebras. In particular, we can construct non-weakly Gorenstein algebras with an arbitrary number of simple modules from certain quantum exterior algebras such as the Liu-Schulz algebra.
\end{abstract}

\maketitle
\section{Introduction}
We assume always that our algebras are finite dimensional, connected and non-semisimple over a field $K$.
Gorenstein-projective modules are modules $M$ over an algebra $A$ with $\Ext_A^i(M,A)=0$ and $\Ext_A^i(\Tr(M),A)=0$ for all $i>0$ (there $\Tr(M)$ denotes the Auslander-Bridger transpose of a module $M$) and such modules were already studied by Bridger and Auslander in the 1960's in the textbook \cite{AB}. Gorenstein-injective modules are defined dually and we call a module Gorenstein-projective-injective when it is Gorenstein-projective and Gorenstein-injective.
It remained an open problem whether every module $M$ with $\Ext_A^i(M,A)=0$ for all $i>0$ is already Gorenstein-projective for nearly 40 years when a negative answer was finally found in \cite{JS}.
Following \cite{RZ}, we call modules $M$ with  $\Ext_A^i(M,A)=0$ for all $i>0$ semi-Gorenstein-projective.
We introduce left weakly Gorenstein algebras as algebras $A$ such that any semi-Gorenstein-projective module is already Gorenstein-projective. Dually, one can define right weakly Gorenstein algebras and weakly Gorenstein algebras are then defined as algebras that are left and right weakly Gorenstein. We remark that weakly Gorenstein algebras were independently introduced and studied by Ringel and Zhang in \cite{RZ} and in an earlier version of this article we used the name nearly Gorenstein instead of weakly Gorenstein.
In this article we present new connections to other classes of algebras, results, constructions and generalisations of recent results on weakly Gorenstein algebras.
The subcategory $\Omega^n(\mod-A)$ consists of all direct sums of direct summands of $n$-th syzygy modules including all projective modules and the subcategory $^{\perp} A$ is defined as the subcategory of semi-Gorenstein-projective modules. Set $\phi_n:= ^{\perp}A \cap \Omega^n(\mod-A)$ and define an algebra $A$ to be $\phi_n$-finite when this subcategory contains only finitely many indecomposable objects up to isomorphism.
Our first main result in section 3 generalises theorem 1.3 of \cite{RZ}, which is the special case $n=1$.
\begin{theorem*}
Let $A$ be a $\phi_n$-finite algebra for some $n \geq 1$. Then $A$ is left weakly Gorenstein.
\end{theorem*}
As an important corollary of the previous theorem we see will in Corollary \ref{corollarymonrefin} that any monomial algebras is weakly Gorenstein and also any endomorphism ring of a module over a representation-finite algebra is weakly Gorenstein.

In section 4 we will give a new connection between gendo-symmetric algebras and Gorenstein homological algebra.
Gendo-symmetric algebras were introduced by Fang and Koenig in \cite{FanKoe} as endomorphism algebras of generators over symmetric Frobenius algebras. Gendo-symmetric algebras generalise the classical symmetric Frobenius algebras and contain several important classes of finite dimensional algebras such as Schur algebras $S(n,r)$ for $n \geq r$ (see \cite{FanKoe3}), blocks of category $\mathcal{O}$ (see \cite{KSX}) and centraliser algebras of nilpotent matrices (see \cite{CM}).
One of the most important tools to study gendo-symmetric algebras is the (co)dominant dimension of a module $M$, which is defined as the smallest $n$ such that $I_n$ ($P_n$) is not projective (injective) when $I_n$ $(P_n)$ denote the terms in a minimal injective (projective) coresolution of $M$. The dominant dimension of an algebra $A$ is then defined as the dominant dimension of the regular module $A$ and one of the most important open homological conjectures, namely the Nakayama conjecture, states that any non-selfinjective algebra has finite dominant dimension. For the study and relevance of the dominant dimension for gendo-symmetric algebras, we refer for example to \cite{FanKoe} and \cite{FanKoe3}.
A classical result for symmetric Frobenius algebras is that the Auslander-Reiten translate $\tau$ satisfies $\tau(M) \cong \Omega^2(M)$. We will generalise this result in Proposition \ref{tauomega} by showing that in a gendo-symmetric algebra we have that $\tau(M) \cong \Omega^2(M)$ if and only if $M$ has codominant dimension at least two.
Using this characterisation we give the following new connection between the study of dominant dimensions and Gorenstein homological algebra:
\begin{theorem*}
Let $A$ be a gendo-symmetric algebra and $M$ and $A$-module. Then $M$ is Gorenstein-projective-injective if and only if $M$ has infinite dominant and infinite codominant dimension.
\end{theorem*} 
As a consequence of the previous theorem, we will see in Corollary \ref{CorCM} that a gendo-symmetric algebra $A$ having a non-projective-injective Gorenstein-projective-injective module already must have a whole Auslander-Reiten component consisting only of Gorenstein-projective-injective modules and thus the existence of a single such modules implies that the algebra is not CM-finite, that is it has infinitely many indecomposable Gorenstein-projective modules. We will see that one can use our methods to construct such Auslander-Reiten components for gendo-symmetric algebras consisting of Gorenstein-projective-injective modules with the help of tame symmetric Frobenius algebras. An explicit example of such a construction is given with the help of the group algebra of the Klein four group.

In Proposition \ref{nakaconj} we will see that left weakly Gorenstein algebras satisfy several of the classical homological conjectures such as the strong Nakayama conjecture and the Gorenstein symmetry conjecture.
From this viewpoint, it is interesting to find a systematic construction of non-weakly Gorenstein algebras in order to challenge those still open homological conjectures. In the literature it seems that no systematic construction of non-weakly Gorenstein algebras exists. The first example of a non-weakly Gorenstein algebra in \cite{JS} is a local 8-dimensional commutative algebra and Ringel and Zhang found several other local algebras by quiver and relations (see for example \cite{RZ} and \cite{RZ2}) that are not weakly-Gorenstein. In particular, it seems that non-local examples are not present in the literature yet.
Our last main result gives a new construction of non-weakly Gorenstein algebras that gives a first systematic way to obtain non-weakly Gorenstein algebras with a large number of simple modules using some modules with exotic properties.
\begin{theorem*}
Let $A$ be a symmetric Frobenius algebra and $X$ a direct sum of indecomposable non-projective modules. Let $M$ be an indecomposable module such that $\Ext_A^{l}(X,M) \neq 0$ for some $l \geq 1$ and $\Ext_A^i(X,M)=0$ for all $i \geq l+1$. 
Then the gendo-symmetric algebra $B:=\End_A(A \oplus X)$ is not weakly Gorenstein.

\end{theorem*}

We will use the Liu-Schulz algebra, a local symmetric Frobenius algebra named and studied extensively by Ringel in \cite{Rin2}, to construct non-weakly Gorenstein algebras with $n$ simple modules for any $n \geq 2$ that also have the exotic property of having modules of arbitrary large but finite codominant dimensions, see Theorem \ref{theoreminfinitefindomdim}.

I thank Steffen Koenig for useful comments and Ragnar-Olaf Buchweitz for telling me about the results in \cite{JS}.

\section{Preliminaries}
\subsection{General preliminaries}
We start by fixing some notations and giving definitions. For background on Auslander-Reiten theory and homological algebra we refer for example to \cite{SkoYam}. 
Let an algebra always be a finite dimensional, connected and non-semisimple algebra over a field $K$ and a module over such an algebra is always a finite dimensional right module, unless otherwise stated. We will also assume that $A$ is a basic algebra. Note that this is no restriction on generality since all notions and homological dimensions we deal with in this article are invariant under Morita equivalence. $D=\Hom_K(-,K)$ denotes the duality for a given finite dimensional algebra $A$ and $J$ will denote the Jacobson radical.
We define the \emph{dominant dimension} $\domdim M$ of a module $M$ with a minimal injective resolution 
$$ 0 \rightarrow M \rightarrow I_0 \rightarrow I_1 \rightarrow ...$$  as the smallest $n$ such that $I_n$ is not projective, and as infinite if no such $n$ exists.
The \emph{codominant dimension} of a module $M$ is defined as the dominant dimension of the dual module $D(M)$. The dominant dimension of a finite dimensional algebra is defined as the dominant dimension of the regular module $A_A$ and the codominant dimension is the codominant dimension of the module $D(_AA)$. \newline
It is well known that $A$ has dominant dimension larger than or equal to 1 if and only if there exists an idempotent $e$ such that $eA$ is a minimal faithful projective-injective right module if and only if there exists an idempotent $f$ such that $Af$ is a minimal faithful projective-injective left module, see section 3.1 in \cite{Yam}. Note that the dominant dimension of $A_A$ is always equal to the dominant dimension of $_AA$ and thus the dominant dimension of an algebra always equals the codominant dimension, see proposition 3.4 in \cite{Yam}. $eA$ always denotes the minimal faithful projective-injective right $A$-module and $Af$ denotes the minimal faithful projective-injective left $A$-module for some idempotents $e$ and $f$. \newline
For $1 \leq i \leq \infty$ $\Dom_i(A)$ (resp. $\Codom_i(A)$) denotes the full subcategory of mod-$A$ constisting of modules with dominant dimension (resp. codominant dimension) larger than or equal to $i$. We often use the notation $\Dom(A):=\Dom_{\infty}(A)$ and $\Codom(A):=\Codom_{\infty}(A)$. Let $\proj(A)$ denote the full subcategory of finitely generated projective modules and $\inj(A)$ the subcategory of finitely generated injective modules. 
An algebra $A$ is called \emph{$g$-Gorenstein} for a natural number $g$ if the left and right injective dimensions of $A$ coincide and are equal to $g$. We call $A$ \emph{Gorenstein}, if $A$ is $g$-Gorenstein for some $g$. In this case $\Gdim A:=\idim A$ is called the Gorenstein dimension of $A$.
$\nu_A=\nu=D \Hom_A(-,A)$ denotes the Nakayama functor and $\nu^{-1}=\Hom_A(D(-),A)$ its inverse. For section 2 and 4 of this article, fix the notations $I:=eA$ for the minimal faithful projective-injective $A$-module, $P:=\nu_A^{-1}(eA)=\Hom_A(D(eA),A)$, $B_1:=\End_A(I)$ and $B_2:=\End_A(P)$. We note that $B_1$ and $B_2$ are isomorphic if the dominant dimension of $A$ is at least two, see for example \cite{Yam}, after theorem 3.4.1. As a right $A$-module $P$ is isomorphic to $fA$. We call a module $W$ \emph{$l$-periodic} for a natural number $l \geq 1$ if $\Omega^{l}(W) \cong W$. For $i \geq 1$, define the set of \emph{$i$-torsionless modules} as $\{ X | \Ext_A^{l}(\Tr(X),A)=0$ for $l=1,...,i \}$. Recall that $2$-torsionless modules are exactly the reflexive modules. \newline
For $n \in \mathbb{N}$, denote by $\Omega^{n}(\mod-A)$ the full subcategory of all direct sums of modules, which are summands of $n-$th syzygy modules including all projective modules. We say that an algebra $A$ is $\Omega^i$-finite for some $i \geq 1$, if the subcategory $\Omega^i(\mod-A)$ has only finitely many indecomposable modules up to isomorphism.
For an $A$-module $M$, $\add(M)$ denotes the full subcategory of $\mod-A$ consisting of all direct summands of a finite direct sum of $M$. Then a map $f:M_0 \rightarrow X$, with $M_0 \in$ $\add(M)$ is called a \emph{right add($M$)-approximation} of $X$ if and only if the induced map $\Hom(N,M_0) \rightarrow \Hom(N,X)$ is surjective for every $N \in$ $\add(M)$.
Note that in case $M$ is a generator, such an $f$ must be surjective.
When $f$ is also a right minimal homomorphism, we call it a \emph{minimal right $\add(M)$-approximation} (or $\add(M)$-Cover).
Note that minimal right $\add(M)$-approximations always exist for finite dimensional algebras.
The kernel of such a minimal right $\add(M)$-approximation $f$ is denoted by $\Omega_M(X)$. Inductively one defines $\Omega_M^{0}(X):=X$ and $\Omega_M^{n}(X):=\Omega_M(\Omega_M^{n-1}(X))$.
The \emph{$\add(M)$-resolution dimension} of a module $X$ is defined as
$M\text{-resdim}(X):=\inf \{ n \geq 0 | \Omega_M^{n}(X) \in \add(M) \}.$
Left approximations and coresolution dimensions are defined dually.
If $C$ is a subcategory of $\mod-A$, we denote by $C/[\proj]$ the subcategory modulo projectives, called stable category, and $C/[\inj]$ the subcategory modulo injectives, called costable category. 
\setcounter{subsection}{1}
The next definition due to Fang and Koenig generalises the classical symmetric algebras $A$, that are defined by the condition $A \cong D(A)$ as $A$-bimodules.
\begin{definition}
%$A$ is called a \emph{Morita algebra} if and only if it has dominant dimension larger than or equal to 2 and $D(Ae) \cong eA$ as $A$-right modules. This is equivalent to $A$ being isomorphic to $\End_B(M)$, where $B$ is a selfinjective algebra and $M$ a generator of mod-$B$ (see \cite{KerYam}). \newline
$A$ is called a \emph{gendo-symmetric algebra} if and only if it has dominant dimension larger than or equal to 2 and $D(Ae) \cong eA$ as $(eAe,A)-$bimodules. This is equivalent to $A$ being isomorphic to $\End_B(M)$, where $B$ is a symmetric algebra and $M$ a generator of mod-$B$ (see \cite{FanKoe}).
\end{definition}
For other characterisations of gendo-symmetric algebras we refer to \cite{FanKoe} and \cite{Mar2}.
Note that if the algebra is gendo-symmetric, $I=P$ and $B_1=B_2$.
For a subcategory $\mathcal{X}$ of mod-$A$ we define for $1 \leq n \leq \infty$: $\mathcal{X}^{\perp n}:= \{ M \in \mod-A | \Ext_A^{i}(Y,M)=0 $ for $1 \leq i \leq n $ and all $Y \in \mathcal{X} \}$ and $^{\perp n}\mathcal{X}:= \{ M \in \mod-A | \Ext_A^{i}(M,Y)=0 $ for $1 \leq i \leq n $ and all $Y \in \mathcal{X} \}$. We often use the shorter notation $\mathcal{X}^{\perp} := \mathcal{X}^{\perp \infty}$ and $^{\perp }\mathcal{X} :=^{\perp \infty}\mathcal{X}$. If $\mathcal{X}=\add(X)$ for a module $X$ we write  $X^{\perp n}$ instead of  $\mathcal{X}^{\perp n}$ and similar for the other notations involving  $\mathcal{X}$. 
%Recall (see \cite{Iya3}) that a subcategory of the form $\add(M)$ (or the module $M$) is called \emph{$n$-cluster-tilting}, if $\add(M)=M^{\perp (n-1)}= ^{\perp (n-1)}M$ holds.
%Note that in case $M$ is a generator-cogenerator this is equivalent to the single conditions $\add(M)=M^{\perp (n-1)}$ or $\add(M)=^{\perp (n-1)}M$.
The following theorem collects results from \cite{APT} in a special case.
\begin{theorem}
\phantomsection
\label{ARSmaintheorem}
\begin{enumerate}
\item The functors $F_1:=\Hom_A(I,-) : \Codom_2(A) \rightarrow \mod-B_1$ and $F_2:=\Hom_A(P,-) : \Dom_2(A) \rightarrow \mod-B_2$ are equivalences of categories.
$F_1$ restricts to an equivalence between $\add(I)$ and the category of projective $B_1$-modules and $F_2$ restricts to an equivalence between $\add(I)$ and the category of injective $B_2$-modules.
\item There are natural isomorphisms of functors: \newline
$\Hom_A(P,-) \cong (-) \otimes_A (D(eA)) \cong  D \Hom_A(-,D(eA))$.
If $A$ is gendo-symmetric the following holds true: $F_2 \cong (-)e $ and $(-)e \cong F_1$.
\item The functor $G_1:= (-) \otimes_{B_1} I : \mod-B_1 \rightarrow \Codom_2(A)$ is inverse to $F_1$ and the functor $G_2:= \Hom_{B_2}(P,-) : \mod-B_2 \rightarrow \Dom_2(A)$ is inverse to $F_2$.
\item For $i \geq 3$, $F_1$ restricts to an equivalence $F_1: \Codom_i(A) \rightarrow ^{\perp i-2}(Ae)$ and $F_2$ restricts to an equivalence $F_2: \Dom_i(A) \rightarrow (Af)^{\perp i-2}$.
\end{enumerate}
\end{theorem}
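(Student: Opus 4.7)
The four assertions all rest on the same mechanism that powers the APT-type equivalences for modules with prescribed dominant or codominant dimension: a right $A$-module $M$ lies in $Codom_2$ iff it admits a projective presentation $I_1 \to I_0 \to M \to 0$ with $I_0, I_1 \in \add(I)$, and dually $M \in Dom_2$ iff it admits an injective copresentation $0 \to M \to J^0 \to J^1$ with $J^0, J^1 \in \add(P)$. Given this, I would first verify that $\Hom_A(I,-)$ restricts to an additive equivalence $\add(I) \simeq \proj B_1$ (it sends $I$ to the regular $B_1$-module), and symmetrically that $\Hom_A(P,-)$ identifies $\add(P)$ with $\inj B_2$ (here using that $P = \nu^{-1}(I)$, so the Nakayama permutation on projective-injectives of $A$ corresponds to turning $\add(P)$ into injective $B_2$-modules via the isomorphism $\End_A(P) \cong fAf$). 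With these base equivalences in hand, (1) follows formally: applying $F_1$ to a presentation $I_1 \to I_0 \to M \to 0$ preserves exactness (since $I$ is projective), yielding a projective presentation of $F_1(M)$; conversely any $B_1$-module is presented by $\add(\proj B_1)$ and can be lifted through the add-equivalence to produce an object of $Codom_2$. Fullness and faithfulness on $Codom_2$ then come from applying $\Hom_A(I,-)$ to the defining presentations and chasing the five lemma.

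Parts (2) and (3) are formal consequences. The chain $\Hom_A(P,-) \cong (-) \otimes_A D(eA) \cong D\Hom_A(-,D(eA))$ is the standard Hom–tensor adjunction combined with the identification $P \cong \Hom_A(D(eA),A)$ and the duality $(-) \otimes_A N \cong D\Hom_A(-,DN)$ for finitely presented $N$. In the gendo-symmetric setting the bimodule identity $D(Ae) \cong I$, equivalently $D(eA) \cong Ae$, collapses $F_1$ and $F_2$ both to multiplication by $e$. For (3) one checks that $G_1 = (-) \otimes_{B_1} I$ and $G_2 = \Hom_{B_2}(P,-)$ send a free presentation of a $B_1$-module (resp.\ injective copresentation of a $B_2$-module) back to an object of $Codom_2$ (resp.\ $Dom_2$), and that the unit and counit of adjunction are isomorphisms on these subcategories because they are isomorphisms on $\add(I)$ and $\add(P)$ and both functors are right/left exact on the relevant sides.

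For (4), I would argue by induction on $i$, translating the codominant or dominant dimension condition into an Ext-vanishing statement on the $B$-module side. Concretely, for $M \in Codom_i$ with $i \geq 3$, truncate the projective resolution after the part lying in $\add(I)$ and apply $F_1$: the resulting $B_1$-module has a projective resolution whose first $i-1$ syzygies realise $\Ext^j_{B_1}(F_1(M), Ae) = 0$ for $1 \leq j \leq i-2$, once one recognises $Ae$ as the image under the appropriate functor of the module whose vanishing Ext corresponds to codominant dimension. The dual argument handles $F_2: Dom_i \to (Af)^{\perp i-2}$. The main obstacle I expect is precisely this bookkeeping step: keeping track of which module on the $B$-side encodes the "next injective must be projective" condition and correctly identifying it with $Ae$ (respectively $Af$) via the natural isomorphisms of part (2); everything else is a formal consequence of APT's framework applied to the minimal faithful projective-injective $I$.
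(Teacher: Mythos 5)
Your route is essentially the paper's: the paper gives no independent argument for this theorem but cites \cite{APT} (Lemma 3.1 for (1), the passages before Propositions 3.9 and 3.10 for (3), Proposition 3.7 for (4)) together with \cite{SkoYam} III.6.1 and \cite{ASS} A.4.11 for (2), and what you sketch is a reconstruction of exactly that machinery --- presentations/copresentations by the projective-injective modules, exactness of $\Hom_A(I,-)$ and $\Hom_A(P,-)$, and the Mueller/APT translation of (co)dominant dimension into Ext-vanishing over the corner algebra. For the ``bookkeeping'' you flag in part (4), the module you need to recognise on the $F_1$-side is simply $Ae \cong \Hom_A(eA,A) = F_1(A_A)$ (dually for $Af$ on the $F_2$-side), after which the transfer of Ext-groups in the relevant range is the content of Theorem \ref{auslanderext}.

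One statement in your base step is, however, wrong as written and must be repaired before the $F_2$-half of part (1) goes through: $\Hom_A(P,-)$ does \emph{not} identify $\add(P)$ with the injective $B_2$-modules --- it sends $P$ to $\Hom_A(P,P)=B_2$, the regular \emph{projective} module. What is true (and is what the theorem itself asserts) is that $F_2$ identifies $\add(I)=\add(\nu(P))$ with $\operatorname{inj}(B_2)$, via $\Hom_A(P,\nu(P)) \cong D\End_A(P) = D(B_2)$. Correspondingly, $M \in Dom_2$ means that its minimal injective copresentation $0 \rightarrow M \rightarrow J^0 \rightarrow J^1$ has $J^0,J^1 \in \add(I)$ (the projective-injectives), not in $\add(P)$; since $P \cong fA$ need not be injective as a right module, $\add(P) \neq \add(I)$ in general, so the two cannot be interchanged. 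With $\add(P)$ replaced by $\add(I)$ in those two places, the remainder of your argument is sound.
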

\begin{proof}
\begin{enumerate}
\item This is a special case of Lemma 3.1 of \cite{APT}.
\item By \cite{SkoYam} Chapter III Lemma 6.1, there is the following natural isomorphism of functors:  \newline $\Hom_A( \Hom_A(D(eA),A),-) \cong (-) \otimes_A (D(eA))$. Now there is another natural isomorphism $(-) \otimes_A (D(eA)) \cong D \Hom_A(-,D(eA))$ by \cite{ASS} Appendix 5, Proposition 4.11.
When $A$ is gendo-symmetric $F_2(-) \cong (-) \otimes_A (D(eA)) \cong (-) \otimes_A Ae \cong (-)e$, is clear since $D(eA) \cong Ae$ as $(A,eAe)$-bimodules. 
\item This follows from \cite{APT}, in the passages before Proposition 3.9 and before Proposition 3.10.
\item This follows from Proposition 3.7 in \cite{APT}. 
\end{enumerate}
\end{proof}
One can use (4) of the previous theorem to calculate dominant dimensions via Ext.

We recall the following well known theorem of Mueller, see \cite{Mue}:
\begin{theorem} \label{mueller}
Let $A$ be an algebra with generator-cogenerator $M$ and $B:=\End_A(M)$. Let $N$ be an $A$-module.
Then the $B$-module $\Hom_A(M,N)$ has dominant dimension equal to $\inf \{ i \geq 1 | \Ext_A^{i}(M,N) \neq 0 \}+1$.
Especially: The dominant dimension of $B$ equals $\inf \{ i \geq 1 | \Ext_A^{i}(M,M) \neq 0 \}+1$.
\end{theorem}

\begin{definition}
The finitistic dominant dimension of an algebra $A$ is defined as
$\fdomdim(A):= \sup \{ \domdim M | M \in \mod-A$ and $\domdim(M) < \infty \}$.
\end{definition}
The finitistic codominant dimension is defined dually.
In \cite{Mar}, we proved that the finitistic dominant dimension of a Nakayama algebra with $n$ simple modules is bounded by $2n-2$. 
\begin{lemma}
\label{torsionless}
Assume $A$ has dominant dimension $d \geq 1$.
\begin{enumerate} 
\item $\Dom_i(A) = \Omega^{i}(\mod-A)$ for all $1 \leq i \leq d$ and $\Codom_i(A) = \Omega^{-i}(\mod-A)$ for all $1 \leq i \leq d$.
\item $\Dom_i(A)=\{ X | \Ext_A^{l}(\Tr(X),A)=0$ for $l=1,...,i \}$, for $i=1,...,d$.
\end{enumerate}
\end{lemma}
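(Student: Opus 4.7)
The plan is to prove (1) directly from the definition of dominant dimension via two elementary homological sublemmas, and then deduce (2) by combining (1) with the classical Auslander--Bridger characterization of syzygy modules via the transpose.

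For the nontrivial inclusion $\Omega^i(\operatorname{mod-}A) \subseteq Dom_i$ of part (1), I induct on $i \leq d$ using two sublemmas. \emph{(a) Horseshoe for dominant dimension:} if $0 \to A' \to B' \to C' \to 0$ is exact and $A'$, $C'$ have dominant dimension $\geq k$, then so does $B'$; this is immediate from the horseshoe lemma for injective resolutions, since the minimal injective resolution of $B'$ appears as a term-wise summand of the horseshoe construction and hence has its first $k$ terms projective. \emph{(b) Shift:} if $0 \to M \to I \to N \to 0$ is exact with $I$ projective-injective and $N$ of dominant dimension $\geq k$, then $M$ has dominant dimension $\geq k+1$; here the injective envelope $I_0^M$ splits off of $I$ (essentiality of $M \subseteq I_0^M$ forces any extension $I_0^M \to I$ to be injective, and injectivity of $I_0^M$ then yields a splitting), and is therefore projective-injective, while the first cosyzygy $\Sigma M := I_0^M/M$ is a direct summand of $N = I/M$, inheriting dominant dimension $\geq k$. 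Now given $M = \Omega^i(N_0)$, write $0 \to M \to P_{i-1} \to M_1 \to 0$ with $P_{i-1}$ projective and $M_1 = \Omega^{i-1}(N_0)$, so $\operatorname{domdim}(M_1) \geq i-1$ by induction. Since $P_{i-1}$ inherits dominant dimension $\geq d$ from $A$, its injective envelope $J$ is projective-injective and the cosyzygy $C := J/P_{i-1}$ has dominant dimension $\geq d-1 \geq i-1$. Composing $M \hookrightarrow P_{i-1} \hookrightarrow J$ gives $0 \to M \to J \to J/M \to 0$, in which the cokernel fits into $0 \to M_1 \to J/M \to C \to 0$; sublemma (a) then gives $\operatorname{domdim}(J/M) \geq i-1$, and sublemma (b) gives $\operatorname{domdim}(M) \geq i$. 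The reverse inclusion $Dom_i \subseteq \Omega^i(\operatorname{mod-}A)$ is immediate: truncating the minimal injective resolution of $M$ after the $i$-th (projective) term exhibits $M$ as an $i$-th syzygy. The $Codom$ statement follows by dualizing to the opposite algebra.

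For (2), part (1) identifies $Dom_i$ with $\Omega^i(\operatorname{mod-}A)$, so it suffices to invoke Auslander's classical theorem that $X \in \Omega^i(\operatorname{mod-}A)$ if and only if $Ext^l(Tr(X), A) = 0$ for $l = 1, \ldots, i$. This is proved by induction on $i$: the base case $i = 1$ is the classical equivalence between $X$ being torsionless and $Ext^1(Tr(X), A) = 0$, read off the Auslander--Bridger exact sequence $0 \to Ext^1(Tr(X), A) \to X \to X^{**} \to Ext^2(Tr(X), A) \to 0$; the inductive step uses the stable natural isomorphism $Tr(\Omega X) \cong \Omega^{-1}(Tr(X))$, which gives $Ext^{l+1}(Tr(X), A) \cong Ext^l(Tr(X_1), A)$ for $X = \Omega X_1$, reducing the vanishing condition for $X$ to that for $X_1$. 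The critical use of the bound $i \leq d$ in the argument for (1) is in controlling the cosyzygy $C$: only for $i \leq d$ does one get $\operatorname{domdim}(C) \geq d-1 \geq i-1$, which is exactly what allows sublemma (a) to kick in and drive the induction through.
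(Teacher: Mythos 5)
Your proof of part (1) is correct and is a genuinely different route from the paper, which simply cites Proposition 4 of \cite{MarVil} for this statement. Both sublemmas are sound (the minimal injective resolution is a termwise summand of the horseshoe resolution, and the injective envelope of $M$ does split off the projective-injective overmodule by essentiality), the induction is set up correctly, and you correctly locate where the hypothesis $i\leq d$ is needed, namely in bounding $domdim(\Omega^{-1}(P_{i-1}))\geq d-1\geq i-1$. This self-contained argument is arguably more informative than the citation.

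Part (2), however, has a genuine gap. The statement you invoke as ``Auslander's classical theorem'' --- that $X\in\Omega^{i}(\text{mod-}A)$ if and only if $Ext^{l}(Tr(X),A)=0$ for $l=1,\dots,i$, for an \emph{arbitrary} algebra --- is false for $i\geq 2$. Only the inclusion ``$i$-torsionless $\Rightarrow$ $i$-th syzygy'' is unconditional (Auslander--Bridger); the converse is equivalent to Auslander's $i$-Gorenstein condition on the minimal injective resolution of $A$, and this is exactly where the hypothesis $domdim(A)\geq i$ must enter --- your argument for (2) never uses it. The error sits in the inductive step: the claimed isomorphism $Tr(\Omega X)\cong\Omega^{-1}(Tr(X))$, hence $Ext^{l+1}(Tr(X),A)\cong Ext^{l}(Tr(X_1),A)$ for $X=\Omega X_1$, does not hold in general. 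Dualizing a minimal projective resolution $\cdots\to P_2\to P_1\to P_0\to X_1\to 0$ yields the exact sequence
$$0\to Ext^{1}(X_1,A)\to Tr(X_1)\to P_2^{*}\to Tr(\Omega X_1)\to 0,$$
so $\Omega(Tr(\Omega X_1))\cong Tr(X_1)/Ext^{1}(X_1,A)$, and your isomorphism is valid only when the obstruction $Ext^{1}(X_1,A)$ vanishes --- which membership of $X_1$ in $\Omega^{i-1}(\text{mod-}A)$ or in $Dom_{i-1}$ does not guarantee. A concrete counterexample to the hypothesis-free statement: over $A=k\langle x,y\rangle/(x,y)^{2}$ the module $\Omega^{2}(A/xA)\cong k^{2}$ (two copies of the simple module) is a second syzygy, yet $(k^{2})^{**}\cong k^{8}$, so it is not reflexive and hence not $2$-torsionless. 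To repair the proof you must establish $Dom_i\subseteq\{X\mid Ext^{l}(Tr(X),A)=0,\ l=1,\dots,i\}$ using the projectivity of the first $i$ terms of the minimal injective resolution of $A$ (this is the content of Proposition 1.6 of \cite{AR}, which the paper combines with part (1)), rather than via a syzygy--transpose identity that holds for no reason in general.
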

\begin{proof}
\begin{enumerate}
\item See \cite{MarVil} proposition 4.
\item Combine proposition 1.6 of \cite{AR} and part (1) of this lemma.
\end{enumerate}
\end{proof}

The following Lemma is proven in Direction 1 of \cite{Rin2}.
\begin{lemma}
\label{direction}
Let $M$ be a generator and cogenerator of $B$. Let $A:= \End_B(M)$. Then the basic versions of $Af$ and $M$ are isomorphic as $B \cong fAf$-modules, where $Af$ is the minimal faithful projective-injective left $A$-module.
\end{lemma}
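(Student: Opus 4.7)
The plan is to establish that $M$ is itself a faithful projective-injective left $A$-module, and then to deduce $\add(M)=\add(Af)$ via the characterization of $Af$ as the basic minimal faithful projective-injective left $A$-module.

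For projectivity of $M$ as a left $A$-module: since $M$ is a generator of $\mod$-$B$, the regular module $B$ is a direct summand of $M^n$ for some $n$, and applying the contravariant functor $\Hom_B(-,M)$ to the split inclusion $B\hookrightarrow M^n$ exhibits $M=\Hom_B(B,M)$ as a direct summand of $\Hom_B(M^n,M)=A^n$ as a left $A$-module. For injectivity, the tensor-Hom adjunction yields an isomorphism of right $A$-modules $DM\cong\Hom_B(M,DB)$; since $M$ is a cogenerator, $DB$ is a summand of $M^n$, so $DM$ is a summand of $\Hom_B(M,M^n)=A^n$ as a right $A$-module and hence projective, which is equivalent to $M$ being injective as a left $A$-module. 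Faithfulness of $M$ as a left $A$-module is immediate from $A=\End_B(M)$ acting faithfully on $M$ by construction.

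Since $A$ has dominant dimension at least $1$, the injective envelope $I(_AA)$ of the regular module is projective-injective, and its basic version coincides with the basic version of $Af$; this basic version is precisely the direct sum of one copy of each indecomposable projective-injective left $A$-module, since every such indecomposable $Ae_k$ is its own injective envelope and already appears as a summand of $_AA$. On the one hand, every faithful left $A$-module $X$ admits an embedding $_AA\hookrightarrow X^m$, which for injective $X$ extends to a split inclusion $I(_AA)\hookrightarrow X^m$; applied to $X=M$, this yields $\add(Af)\subseteq\add(M)$. On the other hand, every summand of $M$ is projective-injective by the previous paragraph, hence lies in $\add(Af)$. Therefore $\add(M)=\add(Af)$, so the basic versions of $M$ and $Af$ are isomorphic. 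The main obstacle is the identification of the basic version of $Af$ with the sum of all indecomposable projective-injective left $A$-modules; beyond this, the proof reduces to a clean application of the tensor-Hom adjunction together with the generator and cogenerator hypotheses on $M$.
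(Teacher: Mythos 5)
Your proof is correct and self-contained: the paper gives no argument of its own here, deferring entirely to Direction 1 of Ringel's note on artin algebras of dominant dimension at least two, and your reasoning (generator $\Rightarrow$ ${}_AM$ projective via $\Hom_B(-,M)$, cogenerator $\Rightarrow$ ${}_AM$ injective via $D(M)\cong \Hom_B(M,D(B))$, faithfulness from $A=\End_B(M)$, and then $\add(M)=\add(Af)$ by identifying the basic version of $Af$ with the sum of the indecomposable projective-injective left modules, i.e.\ with the basic version of the injective envelope of ${}_AA$) is precisely the standard Morita--Tachikawa argument underlying that reference. No gaps.
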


\begin{comment}
The following theorem is proven in \cite{APT} Theorem 3.2 (a).
\begin{theorem}
\label{auslanderext}
Let $A$ be an algebra and let $X,Y$ be $A$-modules with dominant dimension at least 1. For all $n$ with $0 \leq n \leq \domdim(X)+\domdim(Y)-2$ there is an isomorphism $\Ext_A^{n}(X,Y) \cong \Ext_B^{n}(\Hom_A(P,X),\Hom_A(P,Y))$.
\end{theorem}

\end{comment}

The following result is proposition 3.11 from \cite{CheKoe}:
\begin{theorem}
\label{theoremchekoe}
Let $A$ be a finite dimensional algebra and $M$ a non-projective generator and cogenerator of mod-$A$ and define $B:=\End_A(M)$.
Let $B$ have dominant dimension $z+2$, with $z \geq 0$.
Then, for the right injective dimension of $B$ the following holds:
$$\idim(B_B)=z+2\ +\ M\text{-resdim}(\tau_{z+1}(M)\oplus D(A)).$$
For the left injective dimension of $B$ the following holds:
$$\idim(_BB)=z+2\ +\ M\text{-coresdim}(\tau_{-(z+1)}(M)\oplus A).$$
\noindent Here we use the notations $\tau_{z+1}=\tau\Omega^{z}$ and $\tau_{-(z+1)}=\tau^{-1}\Omega^{-z}$, introduced by Iyama (see \cite{Iya}).
\end{theorem}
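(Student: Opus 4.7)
My plan is to reduce the computation of $\text{injdim}(B_B)$ to an $M$-resolution dimension in $\text{mod-}A$ using the equivalence of Theorem \ref{ARSmaintheorem}. By Lemma \ref{direction}, the basic version of the minimal faithful projective-injective left $B$-module equals $M$. Applying Theorem \ref{ARSmaintheorem} to $B$ therefore yields an equivalence $F:\text{Dom}_2(B)\xrightarrow{\sim}\text{mod-}A$ with $F(B_B)\cong M$ and $F(\text{proj}(B))=\text{add}(M)$. A key translation lemma is then $M\text{-resdim}(X)=\text{pdim}_B(F^{-1}(X))$ for $X\in\text{mod-}A$: since $M$ is a generator, every minimal right $\text{add}(M)$-approximation $M_0\twoheadrightarrow X$ is surjective, and by Lemma \ref{torsionless}(1) syzygies of $\text{Dom}_2(B)$-objects remain in $\text{Dom}_2(B)$, so $F^{-1}$ converts the minimal $\text{add}(M)$-resolution of $X$ into the minimal projective resolution of $F^{-1}(X)$.

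The main computation uses the identity $\text{injdim}(B_B)=\text{pdim}_{B^{op}}({}_BD(B))$. The minimal projective resolution of ${}_BD(B)$ has its first $z+2$ terms projective-injective, since they are the $D$-images of the projective-injective terms in the minimal injective coresolution of $B_B$ (forced by $\text{domdim}(B)=z+2$). Cutting at position $z+2$ and applying the opposite-side equivalence $F^{op}:\text{Dom}_2(B^{op})\xrightarrow{\sim}A\text{-mod}$ (which applies to $\Omega^{z+2}({}_BD(B))\in\text{Dom}_{z+2}(B^{op})\subseteq\text{Dom}_2(B^{op})$ by Lemma \ref{torsionless}(1)), the leftover projective dimension becomes an $\text{add}(M)$-resolution dimension in $\text{mod-}A$ of $F^{op}(\Omega^{z+2}({}_BD(B)))$.

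A syzygy-shifting computation using Auslander's formula $\tau=D\text{Tr}$ identifies this image module, up to $\text{add}(M)$-summands, as $\tau_{z+1}(M)\oplus D(A)$: the $z$-fold syzygy shift combined with the Nakayama functor action implicit in passing from $B$ to $B^{op}$ produces the Auslander-Reiten translate $\tau\Omega^z(M)=\tau_{z+1}(M)$, while $D(A)$ arises from the injective envelopes of the projective $A$-summands of $M$ and ensures the formula correctly yields $\text{injdim}(B_B)=z+2$ when $\tau_{z+1}(M)\in\text{add}(M)$.

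The hardest step is this last identification, which requires carefully tracking the Auslander-Bridger transpose through the opposite-side equivalence. Theorem \ref{auslanderext} supplies the Ext-translation needed, converting $B$-side Ext-vanishing into $A$-side conditions whose interpretation via Auslander-Reiten duality yields the factor $\tau$. The left-module formula $\text{injdim}({}_BB)=(z+2)+M\text{-coresdim}(\tau_{-(z+1)}(M)\oplus A)$ follows by the dual argument applied to $B^{op}$, with resolution dimensions becoming coresolution dimensions, $D(A)$ being replaced by $A$, and $\tau\Omega^z$ replaced by its inverse $\tau^{-1}\Omega^{-z}$.
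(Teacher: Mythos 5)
The paper does not actually prove this statement --- it is quoted verbatim as Proposition 3.11 of \cite{CheKoe} --- so your proposal has to stand on its own rather than be measured against an internal argument. Your overall shape is reasonable and matches the standard philosophy (Morita--Tachikawa/APT correspondence, cutting the minimal (co)resolution after the $z+2$ projective-injective terms forced by $\operatorname{domdim}(B)=z+2$, translating the remainder into an $\add(M)$-resolution dimension), and your translation lemma $M\text{-resdim}(X)=\operatorname{pdim}_B\Hom_A(M,X)$ is correct for right modules, since a minimal right $\add(M)$-approximation $M_0\twoheadrightarrow X$ is sent by $\Hom_A(M,-)$ to a projective cover with kernel $\Hom_A(M,\Omega_M(X))$. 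But the entire mathematical content of the theorem is the identification of the leftover module with $\tau_{z+1}(M)\oplus D(A)$, and that is exactly the step you do not carry out: ``a syzygy-shifting computation using $\tau=D\mathrm{Tr}$ \dots produces $\tau\Omega^{z}(M)$'' is an assertion, not an argument, and nothing in the sketch explains why the answer is $\tau\Omega^{z}(M)$ rather than, say, $\Omega^{z+2}(M)$ or $\nu\Omega^{z}(M)$. The $\tau$ enters through a concrete mechanism you never invoke: dualizing $\Hom_A(-,M)$ applied to a minimal projective presentation $P_1\to P_0\to M'\to 0$ of a nonprojective summand $M'$ of $M$ yields the exact sequence of $B$-modules $0\to\Hom_A(M,\tau M')\to\Hom_A(M,\nu P_1)\to\Hom_A(M,\nu P_0)\to D\Hom_A(M',M)\to 0$, whose outer terms are projective-injective because $\nu P_i$ is injective and $M$ is a cogenerator; this is what identifies the nonprojective injective $B$-modules and produces $\tau$, and iterating it through the $z$ further degrees forced by Mueller's theorem produces $\tau\Omega^{z}$. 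Also, since $M$ is a cogenerator one has $D(A)\in\add(M)$ automatically, so the summand $D(A)$ never contributes to the resolution dimension; your story that it ``arises from injective envelopes of projective summands'' is reverse-engineering the shape of the formula rather than deriving it.

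There is also a concrete variance problem in your reduction. You pass to $\operatorname{pdim}$ of the left module ${}_BD(B)$ and then apply an ``opposite-side'' equivalence to $\Omega^{z+2}({}_BD(B))$; but that equivalence lands in left $A$-modules, and the natural contravariant functor $\Hom_A(-,M)$ matching $\add(M)$ with projective left $B$-modules converts left $\add(M)$-approximations (i.e.\ coresolutions) into projective resolutions. Followed literally, your route therefore produces an $M$-coresolution dimension (or a $D(M)$-resolution dimension of a dual module), not the right-module $M\text{-resdim}$ in the statement. This is not a cosmetic issue: the theorem deliberately assigns $M\text{-resdim}(\tau\Omega^{z}(M)\oplus D(A))$ to one injective dimension and $M\text{-coresdim}(\tau^{-1}\Omega^{-z}(M)\oplus A)$ to the other, and these can differ (their equality is precisely the Gorenstein symmetry conjecture discussed in Section 3.2). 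Which formula belongs to which side is decided exactly by the left/right bookkeeping you wave through, so as written the proposal cannot certify the stated pairing. The argument needs to be run on the minimal injective coresolution of $B_B$ itself, using the four-term sequences above to recognize its terms beyond degree $z+1$ as $\Hom_A(M,-)$ applied to a minimal $\add(M)$-resolution of $\tau_{z+1}(M)\oplus D(A)$, with the dual computation done separately for ${}_BB$.
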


The next proposition collects several homological results for symmetric algebras, that we will need later.
\begin{proposition}
\label{extrechnen}
Let $B$ be a symmetric algebra and $M,N$ two $B$-modules.
\begin{enumerate}
\item For the Auslander-Reiten translate $\tau$, the following holds: $\tau(M) \cong \Omega^{2}(M)$, for every indecomposable non-projective module $M$.
\item $\Ext_A^{i}(M,N) \cong \underline{\Hom}_A(\Omega^{i}(M),N) \cong \underline{\Hom}_A(M,\Omega^{-i}(N))$.
\item $\Ext_A^{1}(M,N) \cong \underline{\Hom}_A(N,\Omega^{2}(M)) \cong \underline{\Hom}_A(\Omega^{-2}(N),(M))$
\item $\underline{\Hom}_A(M,N) \cong \underline{\Hom}_A(N,\Omega^{1}(M))$.
\end{enumerate}
\end{proposition}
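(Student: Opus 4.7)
The plan is to derive all four claims from two standard inputs: for a symmetric algebra $B$ the Nakayama functor $\nu=D\Hom_B(-,B)$ is naturally isomorphic to the identity on $\mod$-$B$, and the Auslander--Reiten formula $D\Ext^1(M,N)\cong\underline{\Hom}(N,\tau M)$ holds for every finite-dimensional algebra. For (1), I would invoke the classical identity $\tau\cong\nu\Omega^2$ on the stable category, obtained by applying $\nu$ to a minimal projective presentation $P_1\to P_0\to M\to 0$ and unwinding the definition $\tau=D\operatorname{Tr}$. Since $\nu\cong\mathrm{id}$ for symmetric $B$, this yields $\tau M\cong\Omega^2 M$ stably; for indecomposable nonprojective $M$ both sides are indecomposable and nonprojective, so the isomorphism already holds in $\mod$-$B$.

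For (2), I would use dimension shift in $\Ext$. The short exact sequence $0\to\Omega M\to P\to M\to 0$ with $P$ a projective cover, together with $\Ext^i(P,N)=0$ for $i\geq 1$, gives $\Ext^i(M,N)\cong\Ext^{i-1}(\Omega M,N)$ for $i\geq 2$, reducing to the case $i=1$. Applying $\Hom(-,N)$ to the same sequence identifies $\Ext^1(M,N)$ with $\Hom(\Omega M,N)$ modulo the image of $\Hom(P,N)$, i.e.\ modulo morphisms factoring through $P$. The crucial observation is that, because $B$ is selfinjective, $P$ is also injective: any morphism $\Omega M\to N$ factoring through some projective $Q$ extends along the inclusion $\Omega M\hookrightarrow P$ by injectivity of $Q$, hence factors through $P$. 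Therefore that quotient coincides with $\underline{\Hom}(\Omega M,N)$. The dual formula $\Ext^i(M,N)\cong\underline{\Hom}(M,\Omega^{-i}N)$ is obtained by the analogous argument applied to a minimal injective coresolution of $N$, using that injective = projective in the selfinjective case.

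For (3), combine (1) with the AR formula to get $\Ext^1(M,N)\cong D\underline{\Hom}(N,\tau M)=D\underline{\Hom}(N,\Omega^2 M)$; since all spaces are finite-dimensional over $K$, the duality $D$ can be dropped to yield a $K$-linear isomorphism. The second isomorphism in (3) follows analogously from the dual AR formula $D\Ext^1(M,N)\cong\overline{\Hom}(\tau^{-1}N,M)=\underline{\Hom}(\Omega^{-2}N,M)$, together with the selfinjective identity $\underline{\Hom}=\overline{\Hom}$. For (4), pair $\Ext^1(N,M)\cong\underline{\Hom}(\Omega N,M)$ from (2) with $D\Ext^1(N,M)\cong\underline{\Hom}(M,\tau N)=\underline{\Hom}(M,\Omega^2 N)$ from the AR formula and (1); dropping $D$ yields $\underline{\Hom}(\Omega N,M)\cong\underline{\Hom}(M,\Omega^2 N)$, and because $\Omega$ is an autoequivalence of the stable category, relabelling $\Omega N$ back to $N$ (and swapping the roles of $M$ and $N$) gives $\underline{\Hom}(M,N)\cong\underline{\Hom}(N,\Omega M)$. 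The only subtle point—and the main thing to watch—is that the isomorphisms in (3) and (4) are $K$-linear but not natural in $M,N$: they are obtained via the duality $D$ built into the AR formula, so strictly speaking one is comparing dimensions rather than constructing canonical maps.
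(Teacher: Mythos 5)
Your proposal is correct and follows essentially the same route as the paper: the paper simply cites \cite{SkoYam} for (1)--(3) (the $\tau\cong\nu\Omega^2$ identity with $\nu\cong\mathrm{id}$, the standard $\Ext$--$\underline{\Hom}$ shift for selfinjective algebras, and the Auslander--Reiten formula) and then derives (4) by the same kind of $\Omega$-shifting chain you use. Your closing caveat that (3) and (4) are only $K$-linear isomorphisms obtained by dropping a duality $D$ is a fair and worthwhile observation, and it is harmless here since the paper only ever uses these isomorphisms to compare dimensions or detect vanishing.
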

\begin{proof}
\begin{enumerate}
\item See \cite{SkoYam}, Chapter IV Corollary 8.6.
\item See \cite{SkoYam}, Chapter IV Theorem 9.9.
\item Those are the Auslander-Reiten formulas in the special case of a symmetric algebra, see \cite{SkoYam} Chapter III. Theorem 6.3.
\item This follows from the previous part using that $\Omega$ is an equivalence of the stable module category of $B$ with inverse $\Omega^{-1}$: \newline
$\underline{\Hom}_A(N,\Omega^{1}(M)) \cong \underline{\Hom}_A(\Omega^{1}(N),\Omega^{2}(M)) \cong \Ext_A^{1}(M,\Omega^{1}(N)) \newline \cong \underline{\Hom}_A(\Omega^{1}(M),\Omega^{1}(N)) \cong \underline{\Hom}_A(M,N).$
\end{enumerate}
\end{proof}
We also need the following well known lemma, which can be found in \cite{Ben} as Corollary 2.5.4.:

\begin{lemma}
\label{benson}
Let $A$ be a finite dimensional algebra and $N$ be an indecomposable $A$-module and $S$ a simple $A$-module.
Let $(P_i)$ be the terms of a minimal projective resolution of $N$ and $(I_i)$ the terms of a minimal injective resolution of $N$.
\begin{enumerate}
\item For $l \geq 0$, $\Ext_A^{l}(N,S) \neq 0$ if and only if $S$ is a quotient of $P_l$.
\item For $l \geq 0$, $\Ext_A^{l}(S,N) \neq 0$ if and only if $S$ is a submodule of $I_l$.
\end{enumerate}
\end{lemma}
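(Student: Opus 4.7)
The plan is to prove (1) directly using minimality of $(P_i)$, and then deduce (2) from (1) via the standard $K$-duality $D$. For (1), I would compute $\Ext^l(N,S)$ as the $l$-th cohomology of the complex $\Hom_A(P_\bullet, S)$. The key input is that, since the resolution is minimal, for every $l \geq 1$ the differential $d_l \colon P_l \to P_{l-1}$ has image $\Omega^l(N) \subseteq \rad(P_{l-1})$; this is the defining property of a projective cover at each step. Hence for any $g \in \Hom_A(P_{l-1}, S)$, the composition $g \circ d_l$ takes values in $g(\rad(P_{l-1})) \subseteq \rad(S) = 0$, so the coboundary $\Hom_A(P_{l-1},S) \to \Hom_A(P_l,S)$ vanishes. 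The same argument applied at step $l+1$ kills the outgoing coboundary, so $\Ext^l_A(N,S) \cong \Hom_A(P_l, S)$.

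From here (1) is immediate: since $S$ is simple, $\Hom_A(P_l, S) \cong \Hom_A(\top(P_l), S)$, which is nonzero if and only if the simple $S$ is a direct summand of $\top(P_l)$, equivalently, if and only if $S$ is a quotient of $P_l$.

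For (2), I would invoke the exact duality $D = \Hom_K(-,K)$, which gives a contravariant equivalence between right $A$-modules and right $A^{\mathrm{op}}$-modules, sending injectives to projectives and preserving indecomposability, simplicity, and minimality of resolutions. In particular, applying $D$ termwise to the minimal injective resolution $(I_i)$ of $N$ yields the minimal projective resolution $(D(I_i))$ of $D(N)$ over $A^{\mathrm{op}}$. Applying part (1) to $D(N)$ and the simple module $D(S)$ over $A^{\mathrm{op}}$ gives
\[
\Ext^l_A(S,N) \cong \Ext^l_{A^{\mathrm{op}}}(D(N), D(S)),
\]
and this is nonzero precisely when $D(S)$ is a quotient of $D(I_l)$, which under $D$ translates to $S$ being a submodule of $I_l$. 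No substantial obstacle is anticipated; the only point of care is verifying that $D$ really converts a minimal injective resolution of $N$ into a minimal projective resolution of $D(N)$, which follows because $D$ exchanges injective envelopes with projective covers.
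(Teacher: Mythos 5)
Your proof is correct; the paper itself gives no argument here but simply cites Benson's Corollary 2.5.4, and your reasoning (vanishing of the differentials in $\Hom_A(P_\bullet,S)$ because a minimal resolution has syzygies inside the radicals, hence $\Ext^l(N,S)\cong\Hom_A(\top(P_l),S)$, followed by $K$-duality for the injective statement) is exactly the standard proof given in that reference. No gaps.
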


\begin{comment}
We now recall definitions from Auslander-Reiten theory in subcategories. A reference is \cite{Kre}.
\begin{definition}
Let $C$ be a subcategory of $\mod-A$. For modules $M, N \in C$, a morphism $f: M \rightarrow N$ is called  right almost split in $C$ if it is not a split epimorphism and every morphism $g:X \rightarrow N$ that is not a split epimorphism factors over $f$. Left almost split is defined dually. A short exact sequence $0 \to L \stackrel{g}{\to} M \stackrel{f}{\to} N \to 0$ is called almost split sequence if $f$ is right almost split and $g$ is left almost split. In this case we denote $\tau_C(N):=L$ and $\tau^{-1}_C(L):=N$. We call a morphism $f: M \rightarrow N$ in $C$ irreducible in $C$ if it is neither a split monomorphism nor a split epimorphism and every factorisation $f=gh$ with morphisms $g,h$ in $C$ implies that $g$ is a split monomorphism or $h$ is a split epimorphism. With these definitions one can define the Auslander-Reiten quiver of the subcategory $C$ as in the module category case, see \cite{Kre} Definition 2.2.4 for details.
\end{definition} 
\end{comment}

\subsection{Gorenstein-projective and Gorenstein-injective modules}
In this subsection we recall the most important notions and results from Gorenstein homological algebra. For more background on Gorenstein homological algebra, we refer to \cite{Che} and \cite{AB}.
\begin{definition}
A module $M$ is called \emph{Gorenstein-projective}, if $\Ext_A^{i}(M,A)=0=\Ext_A^{i}(\Tr(M),A)$ for every $i \geq 1$. $M$ is called \emph{Gorenstein-injective} if $D(M)$ is Gorenstein-projective. If $M$ is Gorenstein-projective and Gorenstein-injective, we call $M$ \emph{Gorenstein-projective-injective}. We refer to \cite{Che} and \cite{AB} for many other characterisations and properties of Gorenstein-projective and Gorenstein-injective modules.
For a finite dimensional algebra $A$, we denote by $\Gp(A)$ the category of finite dimensional Gorenstein-projective modules. We denote by $\Gi(A)$ the category of finite dimensional Gorenstein-injective modules. \newline $\Gpi(A)$ is defined as the subcategory of finite dimensional Gorenstein-projective-injective modules.
$A$ is called \emph{CM-finite} if $\Gp(A)$ is representation-finite and $A$ is called \emph{CM-free} if $\Gp(A)=\proj(A)$. Recall that a Gorenstein algebra is CM-free if and only if it has finite global dimension.
%Let $A$ be a $d$-Gorenstein algebra and $B:=\End_A(M)$, where $M$ is Gorenstein-projective and a generator of $\mod-A$, then $B$ is called a \emph{gendo-$d$-Gorenstein algebra}, see \cite{GaKo}.
\end{definition}
We remark that every Gorenstein-projective module is an $n$-th syzygy module for any $n \geq 1$.

The following two propositions can be found in \cite{Che} as proposition 2.2.3 and theorem 2.3.3:
\begin{proposition}
\label{taugorenstein}
\begin{enumerate}
\item A module $M$ is Gorenstein-injective if and only if $\nu^{-1}(M)$ is Gorenstein-projective and the natural morphism $\nu\nu^{-1}(M) \rightarrow M$ is an isomorphism.
\item There is an equivalence of categories: $\nu: \Gp(A) \rightarrow \Gi(A)$ with quasi-inverse $\nu^{-1}$.
\end{enumerate}
\end{proposition}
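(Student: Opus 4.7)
The plan is to prove (2) first and deduce (1) as a formal consequence. If $\nu$ and $\nu^{-1}$ restrict to mutually quasi-inverse equivalences between $Gp(A)$ and $Gi(A)$, with the isomorphism $\nu\nu^{-1}(N)\to N$ for $N \in Gi(A)$ given by the counit of the adjunction, then (1) is immediate: the condition $M \in Gi(A)$ is equivalent to $\nu^{-1}(M) \in Gp(A)$ together with the counit being an isomorphism, which is the formulation of (1).

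For (2), I would use the standard identification $\nu \cong - \otimes_A D(A)$, the adjunction $\nu \dashv \nu^{-1}$ (with $\nu^{-1} \cong \Hom_A(D(A), -)$), and the Tor-Ext duality $\operatorname{Tor}_i^A(X, D(A)) \cong D\Ext^i_A(X, A)$ for finitely generated $X$. To show $\nu(Gp(A)) \subseteq Gi(A)$, pick $M \in Gp(A)$ with a complete projective resolution $P^\bullet$. Every cocycle of $P^\bullet$ is again Gorenstein-projective, so $\Ext^i(-, A) = 0$ and hence $\operatorname{Tor}_i(-, D(A)) = 0$ on these cocycles; thus $\nu$ is exact on each short exact sequence splitting $P^\bullet$, and $\nu(P^\bullet)$ is an exact complex of injectives with $\nu(M)$ at position zero. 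For total acyclicity, every injective has the form $J \cong \nu(Q)$ with $Q \in \proj(A)$, so $\Hom_A(J, \nu(P^\bullet)) \cong \Hom_A(Q, P^\bullet)$ by full faithfulness of $\nu \colon \proj(A) \to \inj(A)$, and this is exact because $P^\bullet$ is totally acyclic. Hence $\nu(M) \in Gi(A)$. The reverse inclusion $\nu^{-1}(Gi(A)) \subseteq Gp(A)$ would follow by a symmetric argument, or by passing to $A^{op}$ through the duality $D$ that exchanges $Gp$ and $Gi$.

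To close (2), I would verify the unit $\eta_M \colon M \to \nu^{-1}\nu(M)$ is an isomorphism for $M \in Gp(A)$, the counit case being dual. A direct computation gives $\nu^{-1}\nu(M) = \Hom_A(D(D\Hom_A(M, A)), A) \cong \Hom_A(\Hom_A(M, A), A) = M^{**}$, with $\eta_M$ corresponding under this identification to the canonical biduality morphism. Since any $M \in Gp(A)$ satisfies $\Ext^l(Tr(M), A) = 0$ for $l = 1, 2$, it is $2$-torsionless in the sense of the excerpt, hence reflexive, so $\eta_M$ is an isomorphism. The main technical obstacle is the total acyclicity step in the previous paragraph: carefully justifying the identification $\Hom_A(\nu(Q), \nu(P^\bullet)) \cong \Hom_A(Q, P^\bullet)$ requires tracking that $\nu \colon \proj(A) \to \inj(A)$ is an equivalence of additive categories in a manner compatible with complexes, which is standard but deserves care given the bimodule structures involved.
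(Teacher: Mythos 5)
Your argument is correct, but note that the paper itself gives no proof of this proposition: it is quoted verbatim from \cite{Che} (Proposition 2.2.3 and Theorem 2.3.3 there), and your write-up is essentially the standard argument behind that citation --- apply $\nu\cong -\otimes_A D(A)$ to a totally acyclic complex of projectives, use $\operatorname{Tor}_i^A(-,D(A))\cong D\Ext_A^i(-,A)$ to keep it exact, and use the equivalence $\nu\colon \proj(A)\to\inj(A)$ for total acyclicity on the injective side. Two small points to make explicit if you want the proof self-contained relative to this paper's conventions: (i) the paper defines Gorenstein projective by $\Ext^i(M,A)=0=\Ext^i(Tr(M),A)$, so your use of complete resolutions and of the fact that their cocycles are again Gorenstein projective silently invokes the Auslander--Bridger equivalence of that definition with the totally-acyclic-complex characterisation (the paper permits this by deferring to \cite{AB} and \cite{Che}, but it is a genuine input, not a triviality); and (ii) in the total acyclicity step, $\Hom_A(Q,P^\bullet)$ is exact simply because $Q$ is projective and $P^\bullet$ is exact --- total acyclicity of $P^\bullet$ is not what is needed there. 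Your reduction of (1) to (2) via the counit, and the identification of the unit with the biduality map $M\to M^{**}$ together with reflexivity of Gorenstein projectives, are both sound.
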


\begin{proposition}
\label{gorensteinkrit}
The following are equivalent:
\begin{enumerate}
\item $A$ is Gorenstein of Gorenstein dimension $g$.
\item $\Gp(A)= \Omega^{g}(\mod-A)$.
\item $\Gi(A)= \Omega^{-g}(\mod-A)$.
\end{enumerate}
When $A$ is Gorenstein, the Gorenstein-projective modules coincide with the modules $M$ with $\Ext_A^{i}(M,A)=0$ for all $i \geq 1$ and the Gorenstein-injective modules coincide with the modules $M$ with $\Ext_A^{i}(D(A),M)=0$ for all $i \geq 1$.
\end{proposition}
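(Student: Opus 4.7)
The plan is to first prove the Ext-vanishing characterization stated at the end of the proposition, since once it is available both (2) and (3) become natural reformulations of (1) in terms of syzygies and cosyzygies. I would establish (1) $\Leftrightarrow$ (2) directly and then obtain (1) $\Leftrightarrow$ (3) by applying the $K$-duality $D$, which exchanges right and left $A$-modules, projectives and injectives, Gorenstein projectives and Gorenstein injectives, while turning $g$-th syzygies into $g$-th cosyzygies; thus (3) for $A$ is the same as (2) for $A^{\mathrm{op}}$.

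For the Ext characterization the forward direction is the definition. For the converse, I would start from $\Ext^i(M,A)=0$ for all $i\geq 1$ and a projective resolution $\cdots \to P_1\to P_0\to M\to 0$. Applying $(-)^*:=\Hom_A(-,A)$ turns the hypothesis into exactness of
\[ 0 \to M^* \to P_0^* \to P_1^* \to P_2^* \to \cdots, \]
an infinite exact sequence of projective left $A$-modules. This is the ``right half'' of a would-be totally acyclic complex around $M$. The remaining task is to verify $\Ext^i(\operatorname{Tr}(M),A)=0$ for $i\geq 1$. For this I would use $\operatorname{injdim}({}_AA)=g$: $\operatorname{Tr}(M)$ appears as the cokernel $\operatorname{cok}(P_0^*\to P_1^*)$ inside the displayed sequence, and via dimension-shifting on the short exact sequences $0\to \Omega^j\operatorname{Tr}(M)\to P_{j-1}^*\to \Omega^{j-1}\operatorname{Tr}(M)\to 0$ the vanishing of $\Ext^i(\operatorname{Tr}(M),A)$ for $i>g$, which is automatic from finite left injective dimension, can be propagated down to $i\geq 1$. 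Splicing the projective resolution of $M$ with the dualized sequence above then yields a totally acyclic projective complex, exhibiting $M$ as Gorenstein projective.

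For (1) $\Rightarrow$ (2), the inclusion $\Omega^g(\operatorname{mod-}A)\subseteq Gp(A)$ follows directly from the Ext characterization and $\operatorname{injdim}(A_A)=g$, via $\Ext^i(\Omega^gN,A)=\Ext^{g+i}(N,A)=0$ for $i\geq 1$, while the reverse inclusion is automatic since every Gorenstein projective module is a $k$-th syzygy for every $k\geq 0$ by its complete projective resolution. For (2) $\Rightarrow$ (1), the inclusion $\Omega^g(\operatorname{mod-}A)\subseteq Gp(A)$ together with the very definition of Gorenstein projectivity forces $\Ext^{g+i}(N,A)=0$ for all $N$ and $i\geq 1$, giving $\operatorname{injdim}(A_A)\leq g$; the analogous bound $\operatorname{injdim}({}_AA)\leq g$ is obtained by transporting the hypothesis to $A^{\mathrm{op}}$ via the transpose, a stable duality that preserves Gorenstein projectivity and intertwines syzygies up to a shift. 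Equality with $g$ then follows because a strictly smaller Gorenstein dimension $g'$ would, by (1) $\Rightarrow$ (2) applied to $g'$, give $Gp(A)=\Omega^{g'}(\operatorname{mod-}A)$, which in generic situations strictly contains $\Omega^g(\operatorname{mod-}A)$, contradicting (2).

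The main obstacle is the reverse direction of the Ext characterization: manufacturing a totally acyclic projective complex from only the vanishing $\Ext^i(M,A)=0$ for $i\geq 1$. It is exactly the two-sided finiteness of $\operatorname{injdim}(A)$ that forces $\Ext^i(\operatorname{Tr}(M),A)=0$ to follow from $\Ext^i(M,A)=0$; without this two-sided control the argument only yields finite Gorenstein projective dimension of $M$, not Gorenstein projectivity on the nose.
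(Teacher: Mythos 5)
The paper does not prove this proposition at all: it is quoted verbatim from Chen's notes (\cite{Che}, Proposition 2.2.3 and Theorem 2.3.3), so any self-contained argument is already a different route. Your treatment of the Ext-characterisation and of $(1)\Rightarrow(2)$ is essentially the standard Auslander--Bridger argument and is sound: from $\Ext^i(M,A)=0$ for $i\geq 1$ the dualized complex $0\to M^*\to P_0^*\to P_1^*\to\cdots$ is exact, $\operatorname{Tr}(M)\cong\ker(P_2^*\to P_3^*)$ realizes $\operatorname{Tr}(M)$ as an $n$-th syzygy of $\operatorname{cok}(P_n^*\to P_{n+1}^*)$ for every $n$, and $\Ext^i(\operatorname{Tr}(M),A)\cong\Ext^{i+n}(\operatorname{cok}(P_n^*\to P_{n+1}^*),A)=0$ once $i+n>g$. (Your displayed short exact sequences go in the wrong direction -- you need $\operatorname{Tr}(M)$ as a \emph{high syzygy} of the later cokernels, not its own syzygies, which are $M^*$ and beyond -- but the idea is correct.)

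The genuine gap is in $(2)\Rightarrow(1)$, precisely at the step ``the analogous bound $\operatorname{injdim}({}_AA)\leq g$ is obtained by transporting the hypothesis to $A^{\mathrm{op}}$ via the transpose, a stable duality that \ldots intertwines syzygies up to a shift.'' The transpose does preserve Gorenstein projectivity, but it does \emph{not} intertwine syzygies: one only has an exact sequence $0\to\Ext^1(M,A)\to\operatorname{Tr}(M)\to\Omega\operatorname{Tr}(\Omega^{-1}M)\to 0$ (and its relatives), so $\operatorname{Tr}\Omega\cong\Omega^{\pm1}\operatorname{Tr}$ holds only modulo $\Ext^{*}(-,A)$-obstructions, which is exactly what you do not control for arbitrary modules. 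Concretely, $\operatorname{Tr}$ does not carry $\Omega^{g}(\text{mod-}A)$ onto $\Omega^{g}(A\text{-mod})$: the left modules of the form $\operatorname{Tr}(\Omega^{g}N)$ are (stably) the transposes of Gorenstein projectives, hence themselves Gorenstein projective, and you learn nothing about a general left $g$-th syzygy. So hypothesis (2), a statement about right modules, does not transport to $A^{\mathrm{op}}$ as claimed, and the bound $\operatorname{injdim}({}_AA)\leq g$ -- which is the whole content of the left--right symmetry in Chen's Theorem 2.3.3 -- is left unproved. A second, smaller defect: your closing argument that the Gorenstein dimension equals $g$ ``in generic situations'' is not a proof, and in fact condition (2) can never pin down the exact value -- for a selfinjective algebra $Gp(A)=\text{mod-}A=\Omega^{g}(\text{mod-}A)$ for every $g$ -- so the implication $(2)\Rightarrow(1)$ is only true with ``Gorenstein dimension $\leq g$''; this is a looseness of the statement itself, but it should be flagged rather than argued around.
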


\begin{proposition}
\label{gorensteingrund}
A module $M$ is Gorenstein-projective if and only if $\tau(M)$ is Gorenstein-injective and a module $N$ is Gorenstein-injective if and only if $\tau^{-1}(N)$ is Gorenstein-projective.

\end{proposition}
\begin{proof}
This is Proposition 2.2.13 in \cite{Che}.

\end{proof}
\section{Weakly Gorenstein algebras}

\subsection{Basics and examples for weakly Gorenstein algebras}
\begin{definition}
We call an algebra $A$ \emph{right weakly Gorenstein}, if the subcategory of Gorenstein-injective modules coincides with $D(A)^{\perp}$, the category of \emph{semi-Gorenstein-injective modules}. We call an algebra $A$ \emph{left weakly Gorenstein}, if the subcategory of Gorenstein-projective modules coincides with $^{\perp}A$, the category of \emph{semi-Gorenstein-projective modules}. We call an algebra $A$ \emph{weakly Gorenstein}, if $A$ is left and right weakly Gorenstein.
\end{definition}

\begin{example}\label{gorexamp}
Every Gorenstein algebra is weakly Gorenstein by \ref{gorensteinkrit}, hence the name.
By \cite{Mar} Lemma 1.2.3 and its dual, every Nakayama algebra is weakly Gorenstein. But not every Nakayama algebra is Gorenstein, take for example the Nakayama algebra with Kupisch series $[3,4]$.
\end{example}

Here is a natural characterisation of weakly Gorenstein algebras that does not mention Gorenstein-projective modules:
\begin{proposition}
An algebra $A$ is weakly Gorenstein if and only if there is an equivalence of categories induced by the Auslander-Reiten translate \newline
$\tau: ^{\perp}\!A/[\proj] \rightarrow D(A)^{\perp}/[\inj]$ with inverse $\tau^{-1}: D(A)^{\perp}/[\inj] \rightarrow ^{\perp}\!A/[\proj]$.
\end{proposition}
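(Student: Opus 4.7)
The plan is to reduce both directions to Proposition~\ref{gorensteingrund}(2), together with the standard left--right duality identity
\[
\Ext^i_A(D(A),\tau M) \;\cong\; \Ext^i_{A^{op}}(\operatorname{Tr}(M),A) \qquad (i \geq 1),
\]
which follows from the isomorphism $\tau M \cong D\operatorname{Tr}(M)$ combined with the $K$-duality $\Ext^i_A(X,DY) \cong \Ext^i_{A^{op}}(Y,DX)$ (applied with $X=D(A)$ and $Y=\operatorname{Tr}(M)$, using $DD=\mathrm{id}$). This identity says precisely that the second vanishing condition $\Ext^i(\operatorname{Tr}(M),A)=0$ appearing in the definition of a Gorenstein projective module is equivalent to $\tau M \in D(A)^{\perp}$.

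For the forward direction I would start from the fact that $\tau$ is always an equivalence between the stable and costable module categories $\operatorname{mod}-A/[proj]$ and $\operatorname{mod}-A/[inj]$. Proposition~\ref{gorensteingrund}(2) says that $\tau$ sends $Gp(A)$ into $Gi(A)$ and that $\tau^{-1}$ sends $Gi(A)$ back into $Gp(A)$, so $\tau$ already restricts to an equivalence $Gp(A)/[proj] \to Gi(A)/[inj]$. When $A$ is nearly Gorenstein the identities $Gp(A) = {}^{\perp}A$ and $Gi(A) = D(A)^{\perp}$ translate this directly into the claimed equivalence ${}^{\perp}A/[proj] \to D(A)^{\perp}/[inj]$.

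For the backward direction, I would combine the duality identity with the definition of Gorenstein projectivity to rewrite it as
\[
M \in Gp(A) \iff M \in {}^{\perp}A \text{ and } \tau M \in D(A)^{\perp}.
\]
The hypothesised equivalence makes the implication $M \in {}^{\perp}A \Rightarrow \tau M \in D(A)^{\perp}$ automatic, so the right-hand side collapses to $M \in {}^{\perp}A$, yielding $Gp(A) = {}^{\perp}A$, i.e.\ left nearly Gorensteinness. For the right nearly Gorenstein identity $Gi(A) = D(A)^{\perp}$ I would use essential surjectivity of the assumed equivalence: any $N \in D(A)^{\perp}$ is isomorphic in the costable category to $\tau M$ for some $M \in {}^{\perp}A$, and by the just-established left nearly Gorensteinness $M \in Gp(A)$, so Proposition~\ref{gorensteingrund}(2) gives $N \cong \tau M \in Gi(A)$.

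The main obstacle is really just bookkeeping: recalling the duality formula relating $\Ext(\operatorname{Tr}(M),A)$ and $\Ext(D(A),\tau M)$ with the correct left/right module structures. Once that identity is in place, both implications reduce to short applications of Proposition~\ref{gorensteingrund}(2) and the definition of Gorenstein projective modules.
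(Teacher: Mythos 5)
Your proposal is correct and follows essentially the same route as the paper: the forward direction restricts the global stable equivalence $\tau$ using Proposition \ref{gorensteingrund}(2) together with the identifications $Gp(A)={}^{\perp}A$ and $Gi(A)=D(A)^{\perp}$, and the backward direction rests on the same key identity $\Ext^{i}(D(A),\tau M)\cong\Ext^{i}(\operatorname{Tr}(M),A)$ that the paper invokes (you spell out its derivation from $\tau\cong D\operatorname{Tr}$ and $K$-duality, which the paper leaves implicit). The only cosmetic difference is that for $D(A)^{\perp}=Gi(A)$ the paper says "dually" while you use essential surjectivity of the assumed equivalence plus the already-established equality $Gp(A)={}^{\perp}A$; both are valid.
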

\begin{proof}
Recall that $\tau: \mod-A/[\proj] \rightarrow \mod-A/[\inj]$ is an equivalence with inverse $\tau^{-1}$.
Assume first that $A$ is weakly Gorenstein. Then $^{\perp}\!A=\Gp(A)$ and $D(A)^{\perp}=\Gi(A)$ and the result follows since $\tau(X)$ is Gorenstein-injective, if $X$ is Gorenstein-projective and vice versa by \ref{gorensteingrund}. Thus $\tau$ restricts to an equivalence:  $^{\perp}\!A/[\proj] \rightarrow D(A)^{\perp}/[\inj]$. \newline
Now assume that $\tau: ^{\perp}\!A/[\proj] \rightarrow D(A)^{\perp}/[\inj]$ is an equivalence with inverse \newline $\tau^{-1}: D(A)^{\perp}/[\inj] \rightarrow ^{\perp}\!A/[\proj]$. We will show $^{\perp}\!A=\Gp(A)$.
Assume that $X$ is a non-projective indecomposable $A$-module and $X \in ^{\perp}\!A$. Then $\tau(X) \in D(A)^{\perp}$. Thus $\Ext_A^{i}(D(A),\tau(X))=0$ for all $i \geq 1$, which is equivalent to $\Ext_A^{i}(Tr(X),A)=0$ for all $i \geq 1$.
Thus, by definition, $X$ is Gorenstein-projective. $D(A)^{\perp}=\Gi(A)$ follows dually, and thus $A$ is weakly Gorenstein. 
\end{proof}

Recall that the \emph{Gorenstein-projective dimension} $\Gpd(M)$ of a module $M$ is defined as the smallest $n$ such that there exists an exact sequence with (possibly infinite dimensional) Gorenstein-projective modules $G^i$
$$0 \rightarrow G^n \rightarrow \cdots G^1 \rightarrow G^0 \rightarrow M \rightarrow 0.$$
The Gorenstein-projective dimension is defined to be infinite if no such finite exact sequence exists for $M$.
We refer for example to \cite{Che} for more on the Gorenstein-projective dimension.
Recall that a module $M$ is called periodic if $\Omega^i(M) \cong M$ for some $i>0$.
\begin{proposition} \label{propohelp}
Let $A$ be a finite dimensional algebra with modules $M$ and $N$.
\begin{enumerate}
\item If $M$ has finite Gorenstein-projective dimension, we have $\Gpd(M)= \sup \{ t \geq 0 | \Ext_A^t(M,A) \neq 0 \}$.
\item If $M$ is non-projective indecomposable semi-Gorenstein-projective, $\Omega^k(M)$ is indecomposable for all $k>0$.
\item If $M$ is semi-Gorenstein-projective, we have $\Ext_A^i(M,N) \cong \underline{\Hom}_A(\Omega^i(M),N)$ for all $i>0$.
\item A semi-Gorenstein-projective and periodic module is Gorenstein-projective.
\item If $M$ is semi-Gorenstein-projective, we have $\underline{\Hom}_A(M,N) \cong \underline{\Hom}_A(\Omega^i(M),\Omega^i(N))$ for all $i>0$.
\end{enumerate}

\end{proposition}
\begin{proof}
\begin{enumerate}
\item See \cite{Che}, proposition 3.2.2.
\item See \cite{RZ}, corollary 3.3.
\item See \cite{Che}, lemma 2.1.8.
\item See \cite{Che}, proposition 2.2.17.
\item See \cite{Iya}, in section 2.1.
\end{enumerate}
\end{proof}

We define the full subcategory $\phi_n:= ^{\perp} A \cap \Omega^n(\mod-A)$ of $\mod-A$ and call an algebra $A$ \emph{$\phi_n$-finite} if this subcategory has only finitely many indecomposable modules up to isomorphism.

\begin{theorem} \label{main result}
Let $A$ be a $\phi_n$-finite algebra for some $n \geq 1$. Then $A$ is left weakly Gorenstein.

\end{theorem}
\begin{proof}
Note that $\phi_k(A) \subseteq \phi_l(A)$ for $k \geq l$ and thus $A$ is also $\phi_r-$finite for all $r \geq n$.
Assume $M$ is semi-Gorenstein-projective. We will show that $M$ is even Gorenstein-projective. This is trivial if $M$ is projective and thus we assume $M$ is non-projective and indecomposable in the following.
Now $\Ext_A^i(\Omega^n(M),A)=\Ext_A^{i+n}(M,A)=0$ for all $i>0$ shows that $M$ being semi-Gorenstein-projective also gives that $\Omega^n(M)$ is semi-Gorenstein-projective. Thus $\Omega^n(M) \in \phi_n(A)$. Now by \ref{propohelp} (2), $\Omega^k(M)$ are all indecomposable modules for $k \geq 1$. 
Since $\phi_n(A)$ contains only finitely many indecomposable modules and $\Omega^k(M)$ is indecomposable for all $k \geq n$, we must have that $\Omega^{l+r}(M) \cong \Omega^l(M)$ for some $l \geq n$ and $r \geq 1$.
This shows that $\Omega^l(M)$ is a periodic module. By \ref{propohelp} (4), $\Omega^l(M)$ is Gorenstein-projective.
Let $(P_i)$ for $i \geq 0$ be a minimal projective resolution of $M$.
Then the exact sequence
$$0 \rightarrow \Omega^l(M) \rightarrow P_{l-1} \rightarrow \cdots \rightarrow P_0 \rightarrow M \rightarrow 0$$
shows that $M$ has finite Gorenstein-projective dimension at most $l$. 
Now assume $M$ has non-zero Gorenstein-projective dimension.
By \ref{propohelp} (2), $\Gpd(M) =  \sup \{ t \geq 0 | \Ext_A^t(M,A) \neq 0 \}$ (since we know that the Gorenstein-projective dimension of $M$ is finite), which gives a contradiction to our assumption that $M$ is semi-Gorenstein-projective. Thus $M$ must have Gorenstein-projective dimension zero and thus is Gorenstein-projective.
\end{proof}

\begin{corollary} \label{corollarymonrefin}
Let $A$ be a finite dimensional algebra.
\begin{enumerate}
\item If $A$ is a monomial quiver algebra, $A$ is weakly Gorenstein.
\item If $A$ is isomorphic to the endomorphism ring of a module over a representation-finite algebra, $A$ is weakly Gorenstein.

\end{enumerate}

\end{corollary}
\begin{proof}
\begin{enumerate}
\item By theorem 1 of \cite{Z}, monomial algebras are $\Omega^2$-finite and thus also $\phi_2$-finite. By \ref{main result} this shows that monomial algebras are left weakly Gorenstein. Since the opposite algebra of a monomial algebra is again monomial, it is also right weakly Gorenstein and thus weakly Gorenstein.
\item We first show that endomorphism rings of modules over representation-finite algebras are $\Omega^2$-finite and thus also $\phi_2$-finite. Let $A=\End_B(M)$ for a representation-finite algebra $B$ and a $B$-module $M$. 
Note that the second syzygy modules are exactly the kernels of maps $P_1 \rightarrow P_0$ between projective modules. Recall that there is an equivalence $\add(M) \rightarrow \proj-A$ given by $\Hom_B(M,-)$, see for example \cite{ARS}, proposition 2.1 in chapter II.
Under this equivalence, a map $f:P_1 \rightarrow P_0$ between projective $A$-modules 
corresponds to a map $g: M_1 \rightarrow M_0$ with $M_i \in \add(M)$. Since $B$ is representation-finite the possible kernels of such maps $g$ can have only finitely many indecomposable summands and thus there are also only finitely many possible indecomposable summands of maps $f:P_1 \rightarrow P_0$ between projective $A$-modules.
This shows that $A$ is $\Omega^2$-finite and thus also $\phi_2$-finite.
By our main result \ref{main result} this shows that endomorphism algebras of modules over representation-finite algebras are left weakly Gorenstein. Since the opposite algebra of $\End_A(M)$ is isomorphic to $\End_{A^{op}}(D(M))$ and $A^{op}$ is again representation-finite, such algebras are also right weakly Gorenstein and thus weakly Gorenstein.

\end{enumerate}
\end{proof}

The next example shows that the algebras $\End_B(M)$ can be of wild representation type in general even when $B$ is representation-finite and thus it might be quite complicated to prove that such endomorphism algebras are weakly Gorenstein directly.

\begin{example}
This example was found and calculated using the GAP-package QPA, see \cite{QPA}.
Let $K$ be the field with three elements.
Let $A:=KQ/I$ with $Q$ and $I$ as follows:
$$
\begin{xy}
  \xymatrix{
      & {\bullet}^1\ar@(ul,dl)_{\alpha} \ar@/ ^1pc/[r]^{\beta_1}   & {\bullet}^2 \ar@/ ^1pc/[l]^{\beta_2}}
\end{xy} \newline 
\\\ , \ I=<\alpha^2-\beta_1 \beta_2, \beta_2 \beta_1>.
$$ 
$A$ is a representation-finite symmetric algebra, see for example \cite{Sko} section 3.14.
Let $M$ be the following representation of $A$ (given by its representation in QPA):
$$
\begin{xy}
  \xymatrix{
      & {\bullet}^{k^4}\ar@(ul,dl)_{M_{\alpha}} \ar@/ ^1pc/[r]^{M_{\beta_1}}   & {\bullet}^{k^2} \ar@/ ^1pc/[l]^{M_{\beta_2}}}
\end{xy} $$ 
 with $M_{\alpha}= \begin{pmatrix}[1]
 0 & 0 & 0 & 0\\
  1 & 0 & 0 &0\\
  0 & 1 & 0 & 0 \\
  1 &0&0& 0\\
 \end{pmatrix}$, $M_{\beta_1}= \begin{pmatrix}[1]
 0 & 0 \\
  0 & -1 \\
  1 & 0  \\
  0 &0\\
 \end{pmatrix}$ and $M_{\beta_2}= \begin{pmatrix}[1]
 1 & 0 &0&0 \\
  0 & 0&0&0 \\
 \end{pmatrix}.$ \newline
 One can check that $M$ is indecomposable and $C:=\End_A(M)$ is isomorphic to the quiver algebra $KW/L$ where $W$ is the quiver with one point and three loops $a_1, a_2$ and $a_3$ and $L=<a_1^2,a_1 a_3,a_2 a_1, a_2^2,a_2 a_3, a_1 a_2-a_3a_1,a_1 a_2-a_3a_2, a_3^2>$.
$C$ is a local non-Gorenstein algebra. Since it is the endomorphism ring of a module over a representation-finite algebra, it is weakly Gorenstein by \ref{corollarymonrefin}.
\end{example}

Since our main concern will be Gorenstein homological algebra of gendo-symmetric algebras in the next section, we give here a concrete example of a gendo-symmetric weakly Gorenstein algebra that is not Gorenstein. Here we use \hyperref[theoremchekoe]{ \ref*{theoremchekoe}}.
\begin{example}
We choose $A$ to be the symmetric Nakayama algebra with Loewy length $7$ and $3$ simple modules and we set $M=e_0 J^2$. The algebra $B:=\End_A(W)$, with $W:= A \oplus M$, is weakly Gorenstein by (2) of the previous theorem. It is easy to see that $\Ext_A^{1}(M,M)=0$, but $\Ext_A^{2}(M,M)\neq 0$ and so $\domdim B=3$. $\tau_2(M)=\tau(\Omega^{1}(M))= \tau(e_2 J^{5})=e_0 J^{5}$ and thus we have to calculate the right $W$-resolution dimension of $ e_0 J^{5}$ by \ref{theoremchekoe}.
First one calculates the start of this $W$-resolution and the first kernel. Note first that because of $\Ext_A^{1}(e_0 J^2,e_0 J^2)=0$, the subcategory $\mathbb{X}:=\add(A \oplus e_0 J^2)$ is extension-closed. Then by Wakamatsus lemma (see \cite{EJ} Chapter 7.2), a map $f:A \rightarrow B$ is an $\mathbb{X}-$approximation if and only if $A \in \mathbb{X}$ and $\Ext_A^{1}(L,ker(f))=0$ for every $L \in \mathbb{X}$. The minimal $\add(W)$-cover $\pi$ of $e_0 J^5$ looks as follows:
$$0 \rightarrow e_0 J^4 \rightarrow e_0 J^{2} \stackrel{\pi}\rightarrow e_0 J^5 \rightarrow 0.$$
\noindent $e_0J^2 \cong e_0 A / e_0 J^5$ and $e_0 J^5 \cong e_2 A/ e_2 J^2$ and $\pi$ is the surjective map which is left multiplication by the unique arrow of length $1$ from $e_2$ to $e_0$.  
Now we calculate a minimal $W$-cover $\pi_2$ of the kernel $e_0 J^4$. We have
$$0 \rightarrow e_0J^4 \oplus e_1 J \rightarrow e_1 A \oplus e_0 J^2 \stackrel{\pi_2}\rightarrow e_0 J^4 \rightarrow 0.$$
Note that $e_0 J^4 \cong e_1 A/e_1 J^3$ and $\pi_2=(f,g)$, where, $f: e_1 A \rightarrow e_0J^4$ is the projective cover of $e_0 J^4$ and $g: e_0 A /e_0J^4 \rightarrow e_0 J^4$ is left multiplication by the unique arrow of length $2$ from $e_1$ to $e_0$.
Now it is clear that the minimal $W$-resolution dimension of $e_0 J^5$ has to be infinite, since the second kernel of the resolution has the first kernel as a direct summand. By \hyperref[theoremchekoe]{ \ref*{theoremchekoe}}, $B$ is not Gorenstein.
\end{example}
In \cite{Mar}, we proved that all gendo-symmetric Nakayama algebras are Gorenstein. Therefore, there are probably not much easier examples of non-Gorenstein, gendo-symmetric algebras than the previous one.
\subsection{Homological conjectures for weakly Gorenstein algebras}
Recall the following famous homological conjectures for a finite dimensional algebra $A$ (see for example \cite{Yam} for a discussion of some of the conjectures):
\begin{enumerate}
\item Strong Nakayama conjecture: \newline
For every non-zero module $M$ there is an $i \geq 0$ with $\Ext_A^{i}(M,A) \neq 0$.
\item The generalized Nakayama conjecture: \newline
For every simple module $S$ there is an $i \geq 0$ with $\Ext_A^{i}(S,A) \neq 0$.
\item The Nakayama conjecture:
Every non-selfinjective algebra has finite dominant dimension.
\item The Gorenstein symmetry conjecture: \newline
The right injective dimension equals the left injective dimension of an algebra. \newline \newline 
It is well-known and easy to see that $(1) \Rightarrow (2) \Rightarrow (3)$.
\end{enumerate}
\begin{proposition} \label{nakaconj}
\begin{enumerate}
\item The strong Nakayama is true for every left weakly Gorenstein algebra and thus every non-selfinjective left weakly Gorenstein algebra has finite dominant dimension.
\item The Gorenstein symmetry conjecture is true for left weakly Gorenstein algebras.
\end{enumerate}
\end{proposition}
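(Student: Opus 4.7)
The plan is to leverage the identity $^{\perp}A = Gp(A)$ given by left nearly Gorenstein, combined with the structural properties of Gorenstein projective modules.

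For part (1), I would argue by contradiction. Suppose $M \neq 0$ satisfies $Ext^i(M,A) = 0$ for all $i \geq 0$. From the vanishing in positive degrees, $M \in {}^{\perp}A = Gp(A)$, so $M$ is Gorenstein projective. Any Gorenstein projective module embeds into a projective module (as the kernel in its totally acyclic complex), so we obtain an injection $M \hookrightarrow A^n$ for some $n$. Projecting onto a suitable summand yields a nonzero map $M \to A$, contradicting $Hom(M,A) = Ext^0(M,A) = 0$. The consequence that a nonselfinjective left nearly Gorenstein algebra has finite dominant dimension then follows from the standard reduction from strong Nakayama to the Nakayama conjecture: if the dominant dimension were infinite, every term of the minimal injective coresolution of $A_A$ would lie in $\mathrm{add}(I)$ for the basic projective-injective $I$; by Lemma \ref{benson} and strong Nakayama applied to each simple $S$, the injective hull of $S$ embeds into, hence is a summand of, some $I_n$, so every indecomposable injective is projective-injective and $A$ is selfinjective.

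For part (2), the strategy is to show $\mathrm{injdim}(A_A) < \infty \Rightarrow \mathrm{injdim}({}_AA) < \infty$; equality then follows from the classical result of Zaks that for an artin algebra having both injective dimensions finite, they coincide. Setting $d = \mathrm{injdim}(A_A)$, dimension shifting gives $Ext^i(\Omega^d M, A) = Ext^{i+d}(M, A) = 0$ for $i \geq 1$ and every right module $M$, hence $\Omega^d M \in {}^{\perp}A = Gp(A)$; thus every right module has Gorenstein projective dimension at most $d$. Applied to the injective cogenerator $(DA)_A$, this provides a short exact sequence
\[ 0 \to K \to G \to (DA)_A \to 0 \]
with $G$ Gorenstein projective and $pd(K) < \infty$ (the existence of such a special Gorenstein projective precover for a module of finite Gpd is standard in Gorenstein homological algebra).

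The punchline is then brief: $G$ being Gorenstein projective embeds into a projective module $Q$, so the inclusion chain $K \subseteq G \subseteq Q$ yields an injection $(DA)_A = G/K \hookrightarrow Q/K$ where $pd(Q/K) = pd(K) + 1 < \infty$. Since $(DA)_A$ is injective, this embedding splits, making $(DA)_A$ a direct summand of $Q/K$; hence $pd((DA)_A) < \infty$, and using $pd((DA)_A) = \mathrm{injdim}({}_AA)$ we are done. The main obstacle, in my view, is invoking the short exact sequence $0 \to K \to G \to (DA)_A \to 0$ above: although standard (cf.\ Holm's work on Gorenstein dimensions) for any module of finite Gpd, it deserves careful attribution, since once this and Zaks's theorem are granted the rest is a short splicing using only that Gorenstein projectives sit inside projectives and that $(DA)_A$ is an injective cogenerator.
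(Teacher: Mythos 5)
Your part (1) is essentially the paper's argument: left nearly Gorensteinness turns $Ext^{i}(M,A)=0$ for $i\geq 1$ into $M\in Gp(A)$, whence $M$ embeds into some $A^{n}$ and $Hom(M,A)\neq 0$; the reduction to the Nakayama conjecture via Lemma \ref{benson} that you spell out is exactly the implication the paper dismisses as well known. For part (2) you take a genuinely different route. The paper argues directly: if $\operatorname{injdim}(A_A)=n\geq 1$, then $\Omega^{n}(A\text{-mod})\subseteq{}^{\perp}\!A=Gp(A)$, and since $Gp(A)\subseteq\Omega^{j}(A\text{-mod})$ for all $j$ this forces $Gp(A)=\Omega^{n}(A\text{-mod})$, so Proposition \ref{gorensteinkrit} gives that $A$ is Gorenstein of Gorenstein dimension $n$, and in particular the left injective dimension equals $n$. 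You instead bound the Gorenstein projective dimension of every module by $d$, apply the standard approximation sequence $0\to K\to G\to D(A)\to 0$ with $pd(K)<\infty$ to the injective cogenerator, split $D(A)$ off the finite-projective-dimension module $Q/K$, and conclude $pd(D(A)_A)=\operatorname{injdim}({}_AA)<\infty$ before invoking Zaks. Both arguments are correct. The paper's is shorter, stays within results already quoted in the text, and delivers the stronger conclusion that $A$ is Gorenstein with $Gordim(A)=\operatorname{injdim}(A_A)$ without needing Zaks; yours imports Holm's precover theorem and Zaks's equality but bypasses the nontrivial characterisation \ref{gorensteinkrit}, and the splicing $K\subseteq G\subseteq Q$ is a nice self-contained way to see that $D(A)$ has finite projective dimension. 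One shared caveat: both proofs establish only the direction $\operatorname{injdim}(A_A)<\infty\Rightarrow\operatorname{injdim}({}_AA)=\operatorname{injdim}(A_A)$ and are silent on the conceivable case $\operatorname{injdim}({}_AA)<\infty=\operatorname{injdim}(A_A)$; since the paper's own proof has the same asymmetry, this is not a defect you introduced.
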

\begin{proof}
\begin{enumerate}
\item
Let $M$ be a module with $\Ext_A^{i}(M,A)=0$ for every $i \geq 1$. Then $M$ is Gorenstein-projective, since $A$ is left weakly Gorenstein. But then there is an embedding $M \rightarrow A^n$ (since $M$ is a torsionless modules as a Gorenstein-projective module), for some $n \geq 1$. Thus $\Hom_A(M,A) \neq 0$.
\item If the right injective dimension of $A_A$ is zero, then $A$ is selfinjective and being selfinjective is left-right symmetric.
So assume that the right injective dimension of $A_A$ is $n \geq 1$. Then for every module $X: \newline \Ext_A^{n+i}(X,A)=0$ for every $i \geq 1$. Now for every $i \geq 1: \Ext_A^{n+i}(X,A) \cong \Ext_A^{i}(\Omega^{n}(X),A)=0$ and thus $\Omega^{n}(X) \in ^{\perp}\!A=\Gp(A)$, using that $A$ is left weakly Gorenstein. This gives us $\Omega^{n}(\mod-A) \subseteq \Gp(A)$. But by definition of Gorenstein-projective $\Gp(A) \subseteq \Omega^{j}(\mod-A)$ for every $j \geq 1$ and thus $\Omega^{n}(\mod-A) = \Gp(A)$, which by \hyperref[gorensteinkrit]{ \ref*{gorensteinkrit}}  means that $A$ is Gorenstein with Gorenstein dimension $n$ and thus also has left injective dimension $n$.
\end{enumerate}
\end{proof}

Recall that an algebra $A$ is called \emph{CM-finite} if $\Gp(A)$ is representation-finite. 
Note that since $\Gp(A) \subseteq \phi_n(A)$ for all $n \geq 1$, being $\phi_n$-finite implies being CM-finite. 
The Auslander-Reiten conjecture states that a module $M$ with $\Ext_A^i(M,M \oplus A) =0$ for all $i>0$ is projective (see for example conjecture 10 in the conjectures section in \cite{ARS}).
While it is known that the finitistic dimension conjecture implies the Auslander-Reiten conjecture, it is not known whether the Auslander-Reiten conjecture holds for a fixed algebra with finite finitistic dimension. For example selfinjective algebras have finitistic dimension equal to zero, but the Auslander-Reiten conjecture is open for selfinjective algebras.
\begin{proposition}
Let $A$ be a left weakly Gorenstein algebra that is CM-finite. Then the Auslander-Reiten conjecture holds for $A$.

\end{proposition}
\begin{proof}
Assume there exists a non-projective indecomposable module $M$ with $\Ext_A^i(M,M \oplus A) =0$ for $i>0$. 
This gives that $\Ext_A^i(M,A) =0$ for $i>0$ and thus $M$ is Gorenstein-projective since $A$ is by assumption left weakly Gorenstein.
Now since $A$ is CM-finite, there are only finitely many Gorenstein-projective $A$-modules and since with $M$ also $\Omega^k(M)$ are Gorenstein-projective for all $k>0$, there must exist integers $l,t>0$ with $\Omega^{l+t}(M) \cong \Omega^l(M)$.
This gives that $\Ext_A^t(\Omega^l(M),\Omega^l(M)) \cong \underline{\Hom}_A(\Omega^{l+t}(M),\Omega^l(M)) \cong \underline{\Hom}_A(\Omega^l(M),\Omega^l(M)) \neq 0$, using \ref{propohelp}.
But by \ref{propohelp}, we also have $\Ext_A^t(\Omega^l(M),\Omega^l(M)) \cong \underline{\Hom}_A(\Omega^l(\Omega^t(M)), \Omega^l(M)) \cong \underline{\Hom}_A(\Omega^t(M),M) \cong \Ext_A^t(M,M)$, where we used \ref{propohelp} (5). Thus $\Ext_A^t(M,M) \neq 0$, which is a contradiction to our assumption that $M$ satisfies $\Ext_A^i(M,M \oplus A) =0$ for $i>0$. Thus no such non-projective $M$ can exist.

\end{proof} 

In particular, this proves that the Auslander-Reiten conjecture holds for monomial algebras since we saw that monomial algebras are weakly Gorenstein in \ref{corollarymonrefin}.

In the next subsection we propose a new homological conjecture and relate it to the Tachikawa conjecture.
\subsection{A new homological conjecture}
We have the following easy lemma:
\begin{lemma} \label{first lemma}
\begin{enumerate}
\item Let $A$ be an algebra of finite finitistic dimension, then every non-projective module $M$ with infinite dominant dimension has infinite projective dimension.
\item Let $A$ be an algebra of finite finitistic codominant dimension, then every non-projective module $M$ with infinite dominant dimension has infinite codominant dimension.
\end{enumerate}
\end{lemma}
\begin{proof}
\begin{enumerate}
\item Assume there is a non-projective module $M$ with finite projective dimension and infinite dominant dimension. Then the modules $\Omega^{-i}(M)$ have arbitrary large finite projective dimension and thus the finitistic dimension is infinite.
\item Assume there is a non-projective module $M$ with finite codominant dimension and infinite dominant dimension. Then the modules $\Omega^{-i}(M)$ have arbitrary large finite codominant dimension and thus the finitistic codominant dimension is infinite.
\end{enumerate}
\end{proof}
Note that in case one algebra $A$ has infinite finitistic dominant dimension then the opposite algebra has infinite finitistic codominant dimension.

The previous lemma motivates to study modules of infinite dominant dimension and look at their minimal projective resolution in order to test algebras for finite finitistic dimension and finite finitistic codominant dimension.
As an analog to the finitistic dimension conjecture, one can ask wheter the finitistic dominant dimension is always finite. We will give a negative answer in the last section using the previous lemma.

The next definition is taken from \cite{CIM}, where such algebras are used to give a new characterisation of representation-finite hereditary algebras.
\begin{definition}
Let $A$ be an algebra, then define the SGC-extension algebra of $A$ to be the algebra $\End_A(D(A) \oplus A)$, that is obtained from $A$ by taking the endomorphism ring of the smallest generator-cogenerator.
\end{definition}

\begin{proposition}
Consider the following four statements:
\begin{enumerate}
\item (Finitistic dimension conjecture) Any algebra has finite finitistic dimension.
\item Any non-projective module with infinite dominant dimension has infinite projective dimension.
\item Any non-injective semi-Gorenstein-injective module has infinite $\add(A \oplus D(A))$-resolution dimension.
\item (First Tachikawa conjecture) For any non-selfinjective algebra, there is an $i \geq 1$ with $\Ext_A^i(D(A),A) \neq 0$.
\end{enumerate}
We have $(1) \implies (2) \implies (3) \implies (4)$.
\end{proposition}
\begin{proof}
We saw $(1) \implies (2)$ already in \ref{first lemma}.
Assume now (2) and let $M$ be a semi-Gorenstein-injective module with finite $\add(A \oplus D(A))$-resolution dimension. The defining condition $\Ext_A^i(D(A),M)=0$ for all $i \geq 1$ gives that the module $\Hom_A(D(A) \oplus A, M)$ has infinite dominant dimension over the SGC-extension algebra $B=\End_A(A \oplus D(A))$, by \ref{mueller}.
Applying the functor $\Hom_A(A \oplus D(A),-)$ to a finite $\add(A \oplus D(A))$-resolution of $M$, one obtains that the $B$-module $\Hom_A(D(A) \oplus A, M)$ has finite projective dimension, contradicting (2).
Now assume (3) and $\Ext_A^i(D(A),A)= 0$ for all $i \geq 1$ in a non-selfinjective algebra. Then the module $M=A$ is a semi-Gorenstein-injective module with finite $\add(A \oplus D(A))$-resolution dimension, contradicting (3).
\end{proof}

The previous proposition motivates us to state (2) and (3) as new homological conjectures that lie between the classical finitistic dimension conjecture and the first Tachikawa conjecture:
\begin{conjecture}
\begin{enumerate}
\item Any non-projective module with infinite dominant dimension has infinite projective dimension.
\item Any non-injective semi-Gorenstein-injective module has infinite $add(A \oplus D(A))$-resolution dimension.
\end{enumerate}
\end{conjecture}
Those conjectures motivate us to study modules of infinite dominant dimension and semi-Gorenstein-injective modules.

We prove the conjecture (2) for Gorenstein algebras:
\begin{proposition}
Let $A$ be a Gorenstein algebra, then any non-injective semi-Gorenstein-injective module has infinite $\add(A \oplus D(A))$-resolution dimension.
\end{proposition}
\begin{proof}
First assume $A$ has Gorenstein dimension equal to zero, which is equivalent to $A$ being selfinjective. In this case $\add(A \oplus D(A))$-resolutions coincide with projective resolutions and the result follows since any non-injective module over a selfinjective algebra has infinite projective dimension.
Assume $A$ is a non-selfinjective Gorenstein algebra for the rest of the proof.
We use three facts about Gorenstein algebras:
\begin{enumerate}
\item An $A$-module $N$ has finite injective dimension if and only if it has finite projective dimension.
\item $A$ is weakly Gorenstein.

\item A non-projective Gorenstein-projective module has infinite projective dimension and a non-injective Gorenstein-injective module has infinite injective dimension.
\end{enumerate}
Assume $N$ is a non-injective semi-Gorenstein-injective module with finite $\add(A \oplus D(A))$-resolution dimension. Then there is an $\add(A \oplus D(A))$-resolution of the form:
$0 \rightarrow M_n \xrightarrow{f_n} \cdots M_0 \xrightarrow{f_0} N \rightarrow 0$ with $M_n \in \add(A \oplus D(A))$.
Let $K_r$ denote the kernel of $f_{r-1}$. Note that all $M_i$ have finite projective dimension, since $A$ is Gorenstein and thus $D(A)$ has finite projective dimension.
Then there is a short exact sequence $0 \rightarrow M_n \rightarrow M_{n-1} \rightarrow K_{n-1} \rightarrow 0$, which shows that also $K_{n-1}$ has finite projective dimension. Using induction, we see that also $N$ has finite projective dimension. But since $A$ is Gorenstein, the semi-Gorenstein-injective modules coincide with the Gorenstein-injective modules, which always have infinite injective dimension when they are non-injective. But for Gorenstein algebras, a module has infinite injective dimension if and only if it has infinite projective dimension and thus $N$ also has infinite projective dimension. This is a contradiction and thus $N$ has infinite $\add(A \oplus D(A))$-resolution dimension.

\end{proof}

\section{Gorenstein homological algebra of gendo-symmetric algebras}
The next propostion will be one of our main tools in this section. We recall the following from \cite{FanKoe2}, which gives a first clue for a connection between dominant dimensions and Gorenstein homological properties in gendo-symmetric algebras:
\begin{proposition}\label{domdimform}
Let $A$ be a non-selfinjective gendo-symmetric algebra and $M$ an $A$-module.
\begin{enumerate}
\item $M$ has dominant dimension larger than or equal to 2 if and only if $\nu^{-1}(M) \cong M$ and in this case the dominant dimension of $M$ is equal to $\inf \{ i \geq 1 | \Ext_A^{i}(D(A),M) \neq 0 \}$+1.
\item $M$ has codominant dimension larger than or equal to 2 if and only if $\nu(M) \cong M$ and in this case the codominant dimension of $M$ is equal to $\inf \{ i \geq 1 | \Ext_A^{i}(M,A) \neq 0 \}$+1.
\end{enumerate}
\end{proposition}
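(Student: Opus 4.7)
The plan is to exploit the key observation that for a gendo-symmetric algebra $A$ with minimal faithful projective-injective module $I = eA$, the Nakayama functor stabilizes $\add(I)$: by the defining $(eAe,A)$-bimodule isomorphism $D(Ae) \cong eA$ one has $\nu(eA) = D(Ae) \cong eA$, so that both $\nu$ and $\nu^{-1}$ restrict to self-equivalences of $\add(I)$ that are naturally isomorphic to the identity functor. Part (2) is formally dual to part (1) (pass to $A^{op}$, which is again gendo-symmetric, and dualize), so I will only outline part (1).

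For the biconditional, I first assume $M \in Dom_2$ and take the beginning of its minimal injective resolution $0 \to M \to I_0 \to I_1$ with $I_0, I_1 \in \add(I)$. Applying the left-exact functor $\nu^{-1} = \Hom_A(DA,-)$ and using the natural identification $\nu^{-1}(I_j) \cong I_j$, the kernel of the induced map $\nu^{-1}(I_0) \to \nu^{-1}(I_1)$ is again $M$, hence $\nu^{-1}(M) \cong M$. For the converse direction I would establish the natural isomorphism of right $A$-modules $\nu^{-1}(M) \cong \Hom_{eAe}(eA, Me) = G_2 F_2(M)$, using the bimodule iso $D(Ae) \cong eA$ together with standard tensor-hom adjunction. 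Since $G_2 F_2(M)$ always lies in $Dom_2$, the assumption $\nu^{-1}(M) \cong M$ then forces $M \in Dom_2$ via the inverse equivalence $G_2$ of Theorem \ref{ARSmaintheorem}(3).

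For the formula, let $d = \operatorname{domdim}(M) \geq 2$ and fix the minimal injective resolution $0 \to M \to I_0 \to I_1 \to \cdots$, with $I_j \in \add(I)$ exactly when $j \leq d-1$. I compute $\Ext^i(DA, M)$ as the $i$-th cohomology of $\nu^{-1}(I_\bullet)$. The natural identification of $\nu^{-1}$ with the identity on $\add(I)$ shows that the truncation of this complex in degrees $\leq d-1$ coincides with the original exact resolution in that range, yielding $\Ext^i(DA, M) = 0$ for $1 \leq i \leq d-2$. For the critical degree $i = d-1$, left-exactness of $\nu^{-1}$ gives $\ker(\nu^{-1}(I_{d-1}) \to \nu^{-1}(I_d)) = \nu^{-1}(\Omega^{-(d-1)}(M))$, and the image term identifies with $\Omega^{-(d-1)}(M)$ itself, so that $\Ext^{d-1}(DA, M) \cong \nu^{-1}(\Omega^{-(d-1)}(M))/\Omega^{-(d-1)}(M)$. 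Since $\operatorname{domdim}(\Omega^{-(d-1)}(M)) = 1 < 2$, the biconditional from the first step forces $\nu^{-1}(\Omega^{-(d-1)}(M)) \not\cong \Omega^{-(d-1)}(M)$, and since the natural embedding of $\Omega^{-(d-1)}(M)$ into $\nu^{-1}(\Omega^{-(d-1)}(M))$ is injective, a dimension comparison shows this cokernel is non-zero.

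The main obstacle is exactly the non-vanishing of $\Ext^{d-1}(DA, M)$: Theorem \ref{auslanderext} does not apply directly because $\operatorname{domdim}(DA) = 0$ whenever $A$ is non-selfinjective, so the usual transfer of $\Ext_A$ to $\Ext_{eAe}$ through $F_2$ is unavailable at this endpoint. The workaround outlined above converts the non-vanishing into the failure of the single module $\Omega^{-(d-1)}(M)$ to lie in $Dom_2$, which is precisely what the biconditional of the first step controls.
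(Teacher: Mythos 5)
Your argument is correct, but it is a genuinely different route from the paper, which does not prove this proposition at all: it simply cites Proposition 3.3 of \cite{FanKoe2} for part (1) and invokes duality for part (2). What you have done is reconstruct a self-contained proof from the two structural facts that a gendo-symmetric algebra provides, namely that $\nu^{-1}=\Hom_A(D(A),-)$ restricts to (a functor naturally isomorphic to) the identity on $\add(eA)$, and that $\nu^{-1}\cong G_2F_2$ always lands in $Dom_2$. The computation of $\Ext^i(D(A),M)$ via the minimal injective coresolution, together with the identification $\Ext^{d-1}(D(A),M)\cong \nu^{-1}(\Omega^{-(d-1)}(M))/\Omega^{-(d-1)}(M)$ and the reduction of its non-vanishing to the failure of $\Omega^{-(d-1)}(M)$ to lie in $Dom_2$, is exactly the right way to get around the fact that Theorem \ref{auslanderext} is useless at this endpoint; your observation that the canonical map $\epsilon_W\colon W\to\nu^{-1}(W)$ is injective here (because $W$ embeds into $I_{d-1}\in\add(eA)$, where $\epsilon$ is an isomorphism) closes the last gap. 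The one step you understate is the isomorphism $\nu^{-1}(M)\cong\Hom_{eAe}(Ae,Me)$: this needs more than the bimodule isomorphism $D(Ae)\cong eA$ and tensor-hom adjunction, namely the bimodule isomorphism $D(A)\cong Ae\otimes_{eAe}eA$, which in turn uses the double centralizer property $A\cong\End_{eAe}(Ae)$; both are available since $\operatorname{domdim}(A)\geq 2$ (and the former is precisely the Fang--Koenig characterisation of gendo-symmetric algebras), so this is a gap in exposition rather than in substance. The trade-off is clear: the paper's citation is shorter, while your argument makes visible exactly which features of gendo-symmetry are used and gives, as a by-product, the explicit description of $\Ext^{d-1}(D(A),M)$ as the cokernel of the canonical map into $\nu^{-1}$ of the $(d-1)$-st cosyzygy.
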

\begin{proof}
\begin{enumerate}
\item This is proven in \cite{FanKoe2} in Proposition 3.3.
\item This is dual to (1).
\end{enumerate}
\end{proof}
\begin{proposition}
Let $A$ be a finite dimensional algebra and $M$ an $A$-module. The following holds:
\begin{enumerate}
\item Let $P_1 \rightarrow P_0 \rightarrow M \rightarrow 0$ be a minimal projective presentation of $M$. Then there exists an exact sequence in $\mod-A$ of the form: \newline
$0 \rightarrow \tau(M) \rightarrow \nu(P_1) \rightarrow \nu(P_0) \rightarrow \nu(M) \rightarrow 0.$
\item Let $0 \rightarrow M \rightarrow I_0 \rightarrow I_1$ be a minimal injective corepresentation of $M$. Then there exists an exact sequence in $\mod-A$ of the form: \newline
$0 \rightarrow \nu^{-1}(M) \rightarrow \nu^{-1}(I_0) \rightarrow \nu^{-1}(I_1) \rightarrow \tau^{-1}(M) \rightarrow 0.$
\end{enumerate}
\begin{proof}
This follows from the definitions of Auslander-Reiten translates, see \cite{SkoYam} Chapter III, Proposition 5.3. 
\end{proof}
\end{proposition}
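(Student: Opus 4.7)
The plan is to unwind the definitions of $\tau$ and $\nu$ in terms of the transpose $Tr$, which is where the four-term sequence secretly lives. Recall that $Tr(M)$ is by definition obtained by applying $Hom_A(-,A)$ to a minimal projective presentation $P_1 \to P_0 \to M \to 0$, giving the exact sequence
$$0 \to Hom_A(M,A) \to Hom_A(P_0,A) \to Hom_A(P_1,A) \to Tr(M) \to 0$$
in mod-$A^{op}$. Moreover $\tau = D\,Tr$ and $\nu = DHom_A(-,A)$ as functors on mod-$A$.

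For (1), I would apply the exact contravariant duality $D = Hom_K(-,K)$ to the above four-term sequence. Since $D$ is exact on finite-dimensional modules, the result is an exact sequence of right $A$-modules with arrows reversed, namely
$$0 \to D\,Tr(M) \to DHom_A(P_1,A) \to DHom_A(P_0,A) \to DHom_A(M,A) \to 0,$$
which, after substituting $\tau(M) = D\,Tr(M)$ and $\nu(X) = DHom_A(X,A)$, is exactly the sequence in (1).

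For (2), I would argue dually, passing through the opposite algebra. First apply $D$ to the minimal injective corepresentation $0 \to M \to I_0 \to I_1$ to obtain a minimal projective presentation $D(I_1) \to D(I_0) \to D(M) \to 0$ of the left $A$-module $D(M)$. Applying the left-module version of part (1) to this presentation produces a four-term exact sequence of right $A$-modules, and then I would identify the terms using $\nu^{-1}(-) = Hom_A(D(-),A)$ (so that $\nu^{-1}(I_j)$ coincides with $Hom_{A^{op}}(D(I_j),A)$) together with $\tau^{-1}(M) = Tr\,D(M)$, yielding the desired sequence.

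There is no genuine obstacle here: both statements are essentially definitional once one fixes the convention that $\tau = D\,Tr$ and takes seriously that $D$ is an exact contravariant self-duality. The only care needed is the left--right bookkeeping in part (2), namely verifying that $Hom_A(-,A)$ sends left projectives to right projectives in such a way that the natural identifications with $\nu^{-1}(I_j)$ and $\tau^{-1}(M)$ hold; this is standard and goes back to the original Auslander--Reiten construction.
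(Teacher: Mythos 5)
Your proof is correct and is precisely the definitional unwinding that the paper delegates to \cite{SkoYam}, Chapter III, Proposition 5.3: the four-term sequence in (1) is the exact duality $D$ applied to the exact sequence $0 \to Hom_A(M,A) \to Hom_A(P_0,A) \to Hom_A(P_1,A) \to Tr(M) \to 0$ defining the transpose. The only slight imprecision is in part (2), where what you actually need is the transpose sequence of the left module $D(M)$ itself (which is already a sequence of right $A$-modules, obtained by applying $Hom_{A^{op}}(-,A)$ to $D(I_1)\to D(I_0)\to D(M)\to 0$) rather than ``the left-module version of part (1)'' (which would carry an extra $D$ and produce left modules); your closing identifications $\nu^{-1}=Hom_A(D(-),A)$ and $\tau^{-1}=Tr\,D$ show you intend the right thing, so this is harmless.
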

The following proposition is inspired by \cite{FanKoe3} Lemma 3.4:
\begin{proposition}\label{tauomega}
Let $A$ be a gendo-symmetric algebra and $M$ an $A$-module.
\begin{enumerate}
\item If $M$ has codominant dimension larger than or equal to 2 if and only if $\tau(M) \cong \Omega^{2}(M)$.
\item If $M$ has dominant dimension larger than or equal to 2 if and only if $\tau^{-1}(M) \cong \Omega^{-2}(M)$.
\end{enumerate}
\end{proposition}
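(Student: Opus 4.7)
The plan is to combine the four-term exact sequence of the preceding proposition with Proposition \ref{domdimform}, exploiting that in a gendo-symmetric algebra the Nakayama functor $\nu$ preserves $\add(eA)$. I will prove $(1)$; part $(2)$ follows by the dual argument. One may assume $M$ has no indecomposable projective direct summand, since such summands are killed by both $\tau$ and $\Omega^{2}$.

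For $(\Rightarrow)$: if $M$ has codominant dimension at least $2$, the terms $P_{0},P_{1}$ of the minimal projective presentation $P_{1}\to P_{0}\to M\to 0$ lie in $\add(eA)$. Every module in $\add(eA)$ has codominant dimension $\infty$, so Proposition \ref{domdimform}$(2)$ yields $\nu(P_{i})\cong P_{i}$ and $\nu(M)\cong M$. Substituting into the four-term sequence of the preceding proposition produces
\[ 0\to \tau(M)\to P_{1}\to P_{0}\to M\to 0, \]
and since $P_{0}\to M$ and the induced $P_{1}\twoheadrightarrow\Omega(M)$ are surjections from the projective covers, this is up to isomorphism the minimal projective presentation of $M$; comparing kernels gives $\tau(M)\cong\Omega^{2}(M)$.

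For $(\Leftarrow)$: I apply the left-exact right adjoint $\nu^{-1}=\Hom_{A}(D(-),A)$ to the left-exact truncation $0\to \tau(M)\to \nu(P_{1})\to \nu(P_{0})$ of the preceding sequence. Using the canonical isomorphism $\nu^{-1}\nu(P)\cong P^{**}\cong P$ for projective $P$ (reflexivity of projectives) this becomes $0\to \nu^{-1}\tau(M)\to P_{1}\to P_{0}$, so $\nu^{-1}\tau(M)\cong \Omega^{2}(M)$. If $\tau(M)\cong\Omega^{2}(M)$, then $\nu^{-1}\tau(M)\cong \tau(M)$, so Proposition \ref{domdimform}$(1)$ shows $\tau(M)$ has dominant dimension at least $2$. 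The classical fact that $0\to \tau(M)\to \nu(P_{1})\to \nu(P_{0})$ is the minimal injective copresentation of $\tau(M)$ (see \cite{SkoYam}) then forces $\nu(P_{0}),\nu(P_{1})\in\add(eA)$. Finally, for gendo-symmetric $A$ one has $\nu(P)\in \add(eA)$ iff $P\in \add(eA)$: the isomorphism $\nu(eA)\cong eA$ restricts $\nu$ to a self-equivalence of $\add(eA)$, while any indecomposable projective outside $\add(eA)$ is sent by $\nu$ to an indecomposable injective outside $\add(eA)$. Hence $P_{0},P_{1}\in \add(eA)$, i.e.\ $M$ has codominant dimension at least $2$.

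Part $(2)$ proceeds by the dual argument, interchanging $\nu$ with $\nu^{-1}$, projectives with injectives, and the two halves of Proposition \ref{domdimform}; the role of reflexivity of projectives is played by $\nu\nu^{-1}(I)\cong I$ for injective $I$. The main technical obstacle in both parts lies in the backward direction, where the hypothesis must be transported through a functor that is only half-exact; this is circumvented by applying $\nu^{-1}$ (respectively $\nu$) only to the left-exact (respectively right-exact) truncation of the four-term sequence and invoking reflexivity to canonically identify $\nu^{-1}\nu(P_{i})$ with $P_{i}$ (respectively $\nu\nu^{-1}(I_{i})$ with $I_{i}$).
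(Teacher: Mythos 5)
Your proof is correct, and the overall strategy is the paper's: in the forward direction the two arguments coincide, and in the backward direction both reduce to showing that $\tau(M)$ has dominant dimension at least $2$, reading off from the minimal injective copresentation $0\to\tau(M)\to\nu(P_1)\to\nu(P_0)$ that $\nu(P_1),\nu(P_0)\in\add(eA)$, and pulling this back to $P_1,P_0$. Where you genuinely differ is in how that intermediate fact is obtained: the paper simply quotes $\Omega^{2}(A\text{-mod})=Dom_2$ (Lemma \ref{torsionless}), so $\tau(M)\cong\Omega^{2}(M)$ lies in $Dom_2$ for free, whereas you first establish $\nu^{-1}\tau(M)\cong\Omega^{2}(M)$ by applying the left exact functor $\nu^{-1}$ to the truncated four-term sequence and invoking reflexivity of projectives, and then feed the hypothesis into Proposition \ref{domdimform}(1). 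Your route is a little longer but self-contained, and it produces the identity $\nu^{-1}\tau\cong\Omega^{2}$ (on modules without projective summands) as a reusable by-product. One caveat, shared with the paper's own proof: your opening reduction to modules without projective summands is only harmless in the forward direction; in the converse a projective non-injective summand $P$ satisfies $\tau(P)=0=\Omega^{2}(P)$ while having codominant dimension $0$, so the statement (and both arguments) should really be read for modules without such summands, since the four-term sequence yields a \emph{minimal} injective copresentation of $\tau(M)$ only under that hypothesis.
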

\begin{proof}
We prove only (1), since the proof of (2) is dual.
We will often use that a module $M$ has codominant dimension at least 2 if and only if $\nu(M) \cong M$ by \ref{domdimform}.
Assume that $M$ has codominant dimension larger than or equal to 2. Assume that $P_1 \rightarrow P_0 \rightarrow M \rightarrow 0$ is the minimal projective presentation of $M$. Then by the previous proposition, there is the following exact sequence: \newline
$0 \rightarrow \tau(M) \rightarrow \nu(P_1) \rightarrow \nu(P_0) \rightarrow \nu(M) \rightarrow 0.$
But since $P_1,P_0$ are also injective and thus have codominant dimension larger than or equal to 2
$\nu(P_1) \cong P_1$ and $\nu(P_0) \cong P_0$. As $M$ also has codominant dimension larger than or equal to 2: $\nu(M) \cong M$ and the exact sequence looks like the beginning of a minimal projective resolution of $M$: \newline
$0 \rightarrow \tau(M) \rightarrow P_1 \rightarrow P_0 \rightarrow M \rightarrow 0.$
Thus $\Omega^{2}(M) \cong \tau(M)$. \newline
Assume now that $\Omega^{2}(M) \cong \tau(M)$. Then look at the minimal projective presentation $P_1 \rightarrow P_0 \rightarrow M \rightarrow 0$ of $M$ and by the previous proposition we get a minimal injective coresolution of $\tau(M)$ as follows. $0 \rightarrow \tau(M) \rightarrow \nu(P_1) \rightarrow \nu(P_0) \rightarrow \nu(M) \rightarrow 0$. But now since $A$ is gendo-symmetric and thus has dominant dimension at least two, we have $\Omega^{2}(\mod-A)=\Dom_2(A)$ by \ref{torsionless} (1) and thus $\tau(M) \cong \Omega^{2}(M)$ has dominant dimension at least two and thus in the above minimal injective coresolution $\nu(P_1)$ and $\nu(P_0)$ are projective-injective, which implies that $P_0$ and $P_1$ are also projective-injective, since in a gendo-symmetric algebra a module $P$ is projective-injective if and only if $P \in \add(eA)$. Thus $M$ has codominant dimension at least two.
\end{proof}
We also refer to \cite{Mar4}, for a characterisation of modules $M$ with the similar property $\tau(M) \cong \nu \Omega^2(M)$ for general Artin algebras.
The following theorem gives the first link between dominant dimensions of modules and Gorenstein homological algebra:
\begin{theorem}
\label{mainresult}
Let $A$ be a gendo-symmetric algebra and $M$ an $A$-module.
Then the following are equivalent:
\begin{enumerate}
\item $M$ is Gorenstein-projective-injective.
\item $M$ has infinite dominant dimension and infinite codominant dimension.
\end{enumerate}
\end{theorem}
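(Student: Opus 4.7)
The plan is to reduce both directions to Proposition~\ref{domdimform}, with the backward direction additionally using the splicing of the minimal projective and injective resolutions of $M$ into a complete resolution.

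For $(\Rightarrow)$, assume $M \in Gpi(A)$. Since $M$ is Gorenstein projective, $M$ is an infinite syzygy, hence lies in $Dom_2$ by Lemma~\ref{torsionless}(1) (using $\text{domdim}(A) \geq 2$). Proposition~\ref{domdimform}(1) then gives $\text{domdim}(M) = \inf\{i \geq 1 \mid \Ext^i(D(A), M) \neq 0\} + 1$. The vanishing $\Ext^i(D(A), M) = 0$ for every $i \geq 1$ follows from $M \in Gi(A)$: this means $D(M)$ is left Gorenstein projective, so $\Ext^i_{A^{op}}(D(M), A) = 0$, and the standard Ext-duality $\Ext^i_A(D(A), M) \cong \Ext^i_{A^{op}}(D(M), A)$ delivers the result. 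Hence $\text{domdim}(M) = \infty$. The argument for $\text{codomdim}(M) = \infty$ is symmetric, combining Proposition~\ref{domdimform}(2) with $\Ext^i(M, A) = 0$ (which is part of $M$ being Gorenstein projective).

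For $(\Leftarrow)$, assume $\text{domdim}(M) = \text{codomdim}(M) = \infty$. Proposition~\ref{domdimform} immediately yields $\Ext^i(M, A) = 0 = \Ext^i(D(A), M)$ for $i \geq 1$. Using $D(Ae) \cong eA$ in the gendo-symmetric case, the hypotheses translate into the statement that both the minimal projective and the minimal injective resolution of $M$ consist of modules in $\add(eA)$, and splicing them yields a doubly infinite exact complex
\[
\mathcal{P}^\bullet: \;\cdots \to P_1 \to P_0 \to I^0 \to I^1 \to \cdots
\]
of projective-injectives with $M = \ker(I^0 \to I^1) = \operatorname{im}(P_0 \to I^0)$. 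The idea is to show $\mathcal{P}^\bullet$ remains exact under $\Hom_A(-, A)$, which reduces to $\Ext^1(N, A) = 0$ for every cycle $N$ of $\mathcal{P}^\bullet$. For syzygy cycles $N = \Omega^k(M)$ this is immediate from $\Ext^1(\Omega^k(M), A) = \Ext^{k+1}(M, A) = 0$. For cosyzygy cycles $N = \Omega^{-k}(M)$ with $k \geq 1$, I would prove inductively that $\Omega^{-k}(M)$ again lies in $Codom_\infty$. In the inductive step, the short exact sequence $0 \to \Omega^{-(k-1)}(M) \to I^{k-1} \to \Omega^{-k}(M) \to 0$ with $I^{k-1}$ projective produces, via minimality of projective covers, a splitting $I^{k-1} \cong P(\Omega^{-k}(M)) \oplus Q$ with $Q$ in the kernel; comparing kernels yields $\Omega^{-(k-1)}(M) \cong \Omega(\Omega^{-k}(M)) \oplus Q$. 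Since $Codom_\infty$ is closed under summands, the inductive hypothesis on $\Omega^{-(k-1)}(M)$ transfers to $\Omega(\Omega^{-k}(M))$ and together with $P(\Omega^{-k}(M)) \in \add(eA)$ gives $\Omega^{-k}(M) \in Codom_\infty$. Proposition~\ref{domdimform}(2) applied to $\Omega^{-k}(M)$ then forces $\Ext^i(\Omega^{-k}(M), A) = 0$ for every $i \geq 1$. So $\mathcal{P}^\bullet$ is a complete resolution of $M$; applying $\Hom_A(-, A)$ to it yields (by reflexivity of projective modules) a complete resolution of $Tr(M)$ as a left $A$-module, so $\Ext^i(Tr(M), A) = 0$ for $i \geq 1$ and $M \in Gp(A)$.

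Gorenstein injectivity follows by symmetry: the gendo-symmetric condition is left-right symmetric, and $D(M)$ as a left $A$-module satisfies $\text{domdim}_{A^{op}}(D(M)) = \text{codomdim}_A(M) = \infty$ and $\text{codomdim}_{A^{op}}(D(M)) = \text{domdim}_A(M) = \infty$, so running the same argument over $A^{op}$ makes $D(M)$ Gorenstein projective over $A^{op}$, i.e.\ $M \in Gi(A)$. The main obstacle is the inductive step, whose crux is the decomposition $I^{k-1} \cong P(\Omega^{-k}(M)) \oplus Q$ and the forced splitting $\Omega^{-(k-1)}(M) \cong \Omega(\Omega^{-k}(M)) \oplus Q$; once this is in place, everything else is a formal consequence of Proposition~\ref{domdimform}.
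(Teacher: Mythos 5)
Your proposal is correct. The forward direction is essentially the paper's: Gorenstein projective modules are infinite syzygies, hence lie in $Dom_2$, and then Proposition \ref{domdimform} converts the Ext-vanishing conditions $\Ext^i(D(A),M)=0$ and $\Ext^i(M,A)=0$ into infinite dominant and codominant dimension. The backward direction, however, takes a genuinely different route. The paper uses the gendo-symmetric identity $\tau(M)\cong\Omega^{2}(M)$ (Proposition \ref{tauomega}) together with the Auslander--Reiten formula $\Ext^{i}(Tr(M),A)\cong\Ext^{i}(D(A),\tau(M))$, so that Gorenstein projectivity of $M$ reduces to the infinite dominant dimension of $\Omega^{2}(M)$; Gorenstein injectivity is handled dually via $\tau^{-1}(M)\cong\Omega^{-2}(M)$. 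You instead splice the minimal projective and injective resolutions into a doubly infinite exact complex of projective-injectives and verify that it is totally acyclic by checking $\Ext^{1}(N,A)=0$ on all cycles, which is the classical complete-resolution characterisation of Gorenstein projectives from \cite{AB} and \cite{Che}. Both arguments are valid. The paper's is shorter and exposes the structural fact $\tau\cong\Omega^{2}$ on $Codom_2$, which is reused later (e.g.\ in Corollary \ref{CorCM}); yours has the merit of making fully explicit, via the splitting $I^{k-1}\cong P(\Omega^{-k}(M))\oplus Q$ and $\Omega^{-(k-1)}(M)\cong\Omega(\Omega^{-k}(M))\oplus Q$, the step that the paper dispatches with ``clearly then also all the modules $\Omega^{i}(M)$ have infinite dominant and codominant dimensions for every $i\in\mathbb{Z}$'' -- the half of that claim concerning (co)syzygies in the ``wrong'' direction does require exactly the minimality argument you supply.
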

\begin{proof}
$(1) \Rightarrow (2)$: A projective module is projective-injective if and only if it is a Gorenstein-projective-injective module.
Assume now that $M$ is Gorenstein-projective-injective and not projective. We conclude using \ref{torsionless} (1) and its dual that $M$ has dominant dimension and codominant dimension larger than or equal to 2, since $A$ has dominant dimension at least 2.
Since $M$ is Gorenstein-injective, $\Ext_A^{i}(D(A),M)=0$ for all $i \geq 1$ and thus $M$ has infinite dominant dimension by \ref{domdimform}. Since $M$ is Gorenstein-projective $\Ext_A^{i}(M,A)=0$ for all $i \geq 1$ and thus $M$ has infinite codominant dimension again by \ref{domdimform}. \newline
$(2) \Rightarrow (1)$:
Assume now that $M$ has infinite dominant and infinite codominant dimension. Clearly then also all the modules $\Omega^{i}(M)$ have infinite dominant and codominant dimensions for every $i \in \mathbb{Z}$. Then $\nu^{-1}(M) \cong M$, since the dominant dimension of $M$ is larger than or equal 2. And dually $\nu(M) \cong M$, since the codominant dimension of $M$ is larger than or equal to 2.
Since $M$ has infinite codominant dimension, we get $\Ext_A^{i}(M,A)=0$ for all $ i \geq 1$ and with the previous proposition \ref{tauomega} the following holds: $\tau(M) \cong \Omega^{2}(M)$. Now $M$ is Gorenstein-projective if and only if additionally $\Ext_A^{i}(\Tr(M),A)=0$ for all $i \geq 1$. But $\Ext_A^{i}(\Tr(M),A)= \Ext_A^{i}(D(A),\tau(M))=\Ext_A^{i}(D(A),\Omega^{2}(M)))=0$ for all $i \geq 1$, since with $M$ also $\Omega^{2}(M)$ has infinite dominant dimension.  Thus $M$ is Gorenstein-projective.
Since $M$ has infinite dominant dimension, $\Ext_A^{i}(D(A),M)=0$ for all $ i \geq 1$ and by the previous lemma the following holds: $\tau^{-1}(M) \cong \Omega^{-2}(M)$. Now $M$ is Gorenstein-injective if and only if additionally $\Ext_A^{i}(\tau^{-1}(M),A)=0$ for all $i \geq 1$. But $\Ext_A^{i}(\tau^{-1}(M),A)=\Ext_A^{i}(\Omega^{-2}(M),A)=0$ for all $i \geq 1$, since with $M$ also $\Omega^{-2}(M)$ has infinite codominant dimension. 
\end{proof}

\begin{corollary}
\label{CorCM}
Let $A$ be a gendo-symmetric non-selfinjective algebra.
\begin{enumerate}
\item If an Auslander-Reiten component contains a non-projective-injective Gorenstein-projective-injective module then this component consists only of Gorenstein-projective-injective modules. Thus the indecomposable non-projective-injective Gorenstein-projective-injective modules form unions of stable Auslander-Reiten components of the algebra.
\item If $A$ is CM-finite, then there is no non-projective-injective Gorenstein-projective-injective $A$-module.
\end{enumerate}
\end{corollary}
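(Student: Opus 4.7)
The plan combines Theorem \ref{mainresult} (which rewrites ``Gorenstein projective-injective'' as ``infinite dominant and codominant dimension'') with Proposition \ref{tauomega} (which identifies $\tau$ with $\Omega^{2}$ on such modules) to exhibit a $\tau$-invariant, extension-closed class of modules. Part (1) then follows by propagating this class through the AR component starting at $M$, and part (2) by a contradiction via Auslander's theorem on finite AR components.

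For part (1), the core step is $\tau$-invariance: if $N$ is nonprojective and Gorenstein projective-injective, then so is $\tau(N)$. By Theorem \ref{mainresult}, $N$ has infinite dominant and codominant dimension, so Proposition \ref{tauomega} gives $\tau(N) \cong \Omega^{2}(N)$. This syzygy is Gorenstein projective because the class of Gorenstein projective modules is closed under syzygies, and Gorenstein injective by Proposition \ref{gorensteingrund}(2). A dual argument yields $\tau^{-1}(N) \cong \Omega^{-2}(N)$ Gorenstein projective-injective, using the side observation that a nonprojective Gorenstein projective-injective module is automatically noninjective (a module which is both Gorenstein projective and injective is projective, since the embedding into its injective envelope splits), so that $\tau^{-1}(N)$ is defined. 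Next, the middle term $E$ of an AR sequence $0 \to \tau(N) \to E \to N \to 0$ is an extension of two Gorenstein projective-injective modules; since the Gorenstein projective and Gorenstein injective classes are both closed under extensions and direct summands, every indecomposable summand of $E$ is Gorenstein projective-injective, and the same applies to AR sequences starting at $N$. Propagating through $\tau^{\pm 1}$-orbits and AR-sequence middle-term summands from $M$ therefore keeps one inside the Gorenstein projective-injective class, so every module in the AR component of $M$ is Gorenstein projective-injective; in particular every projective in the component must be projective-injective. The second sentence of (1) follows by removing these projective vertices to obtain a union of stable AR components of nonprojective Gorenstein projective-injective modules.

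For part (2), I argue by contradiction: if $A$ is CM-finite and a nonprojective Gorenstein projective-injective module $M$ exists, part (1) places the whole AR component of $M$ inside the Gorenstein projective subcategory; CM-finiteness bounds the number of isomorphism classes of indecomposable Gorenstein projective modules, so this component is finite. Auslander's theorem on finite connected components of the AR quiver of a connected Artin algebra then forces the component to exhaust the entire AR quiver of mod-$A$, so every indecomposable module is Gorenstein projective-injective. In particular every indecomposable projective is Gorenstein projective-injective, hence projective-injective, which makes $A$ selfinjective and contradicts the hypothesis.

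The main obstacle is the $\tau$-invariance step, which requires Proposition \ref{tauomega} in combination with the Gorenstein projective/injective interchange under $\tau$ from Proposition \ref{gorensteingrund}(2), together with the noninjectivity observation that makes $\tau^{-1}$ available. Everything else is routine extension- and summand-closure plus a standard invocation of Auslander's finiteness theorem.
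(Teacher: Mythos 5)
Your proof is correct, and part (1) follows essentially the paper's own route: the $\tau$- and $\tau^{-1}$-invariance via Theorem \ref{mainresult} together with Proposition \ref{tauomega}, and then closure of the class under middle terms of AR sequences. The only cosmetic difference there is that you justify the middle-term step by extension-closure of $Gp(A)$ and $Gi(A)$ (a standard fact, though not recorded in the paper's preliminaries), whereas the paper stays entirely inside the ``infinite dominant and codominant dimension'' characterisation and applies the Horseshoe lemma to the projective resolutions and injective coresolutions; the substance is the same. In part (2) both arguments invoke Auslander's theorem to conclude that a finite AR component exhausts the AR quiver of the connected algebra, but the final contradictions differ: the paper argues that $A$ is then representation-finite, hence nearly Gorenstein by \ref{examples}, hence of finite dominant dimension by \ref{nakaconj}, contradicting the fact that the summands of $A_A$ lie in the component and so have infinite dominant dimension; you instead observe that every indecomposable projective lies in the component, is therefore Gorenstein projective-injective, hence projective-injective (by the splitting argument you give, which is the same observation made at the start of the proof of \ref{mainresult}), forcing $A$ to be selfinjective. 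Your version is slightly more elementary and self-contained, since it avoids the detour through the Nakayama conjecture for representation-finite algebras; the paper's version has the advantage of reusing machinery (\ref{nakaconj}, \ref{examples}) already established for nearly Gorenstein algebras.
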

\begin{proof}
\begin{enumerate}
\item Assume that $M$ is a non-projective-injective Gorenstein-projective-injective module. With $M$ being Gorenstein-projective-injective also $\tau(M) \cong \Omega^{2}(M)$ and $\tau^{-1}(M) \cong \Omega^{-2}(M)$ (using \ref{tauomega}) are Gorenstein-projective-injective, since being Gorenstein-projective-injective is equivalent to having infinite dominant and infinite codominant dimension by the previous theorem. 
In a Auslander-Reiten sequence $ 0 \rightarrow \tau(M) \rightarrow S \rightarrow M \rightarrow 0$, also $S$ is Gorenstein-projective-injective, since with $M$ and $\tau(M) \cong \Omega^{2}(M)$ also the middle term $S$ has infinite dominant and codominant dimension by the Horseshoe lemma.
Thus every module in the Auslander-Reiten component containing $M$ is a Gorenstein-projective-injective module.
\item Assume that $A$ is CM-finite and contains a non-projective-injective module $M$ that is Gorenstein-projective-injective. Then $A$ contains a whole Auslander-Reiten component of modules that are Gorenstein-projective-injective by 1. Then there are 2 possible cases:
\begin{enumerate}
\item The Auslander-Reiten component is infinite. But then $A$ is not CM-finite. This is a contradiction.
\item The Auslander-Reiten component is finite. Then $A$ is representation finite and thus the Nakayama conjecture holds true for $A$. But by (1), every module in the Auslander-Reiten component has infinite dominant and codominant dimension despite that fact that the dominant dimension of $A$ is finite. This is a contradiction.
\end{enumerate}
\end{enumerate}
\end{proof}

We have the following diagram for gendo-symmetric algebras $A$ with minimal faithful projective-injective module $eA$, where we use the notations as in \hyperref[ARSmaintheorem]{ \ref*{ARSmaintheorem}} and $\mathcal{W}:= \{ X \in ^{\perp}{Ae} \cap {Ae}^{\perp}| G_1(X) \cong G_2(X) \}$ (we will see later that $\mathcal{W}$ can be described much easier if the algebra is weakly Gorenstein): \newline
\underline{diagram 1}
$$\xymatrix@1{ & \text{Gpi}(A)=\text{Dom(A)}\cap \text{Codom(A)} \mystrut\ar@{^{(}->}[dr] \ar@{_{(}->}[dl] \ar[dddd]_{(-)e} &  \\ \text{Dom(A)} \ar@/^/[dd]^{F_2=(-)e}  &  & \text{Codom(A)} \ar@/^/[dd]^{F_1=(-)e} \\  &  &  \\ {(Ae)}^{\perp} \ar@/^/[uu]^{G_2=\text{Hom}_{eAe}(eA,-)}&  & {}^{\perp}(Ae) \ar@/^/[uu]^{G_1=(-)\otimes_{eAe} eA} \\ & \mathcal{W} \mystrut\ar@{_{(}->}[ru]\ar@{^{(}->}[lu] & }$$
Using this diagram, we can prove the next theorem:

\begin{theorem}
\label{diagram}
Let $A$ be a gendo-symmetric non-selfinjective algebra with minimal faithful projective-injective module $eA$. Then $(-)e: \Gpi(A) \rightarrow \mathcal{W}$ is an equivalence.
\end{theorem}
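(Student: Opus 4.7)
The plan is to deduce the statement almost directly from Theorem~\ref{mainresult} combined with the machinery of Theorem~\ref{ARSmaintheorem}. By Theorem~\ref{mainresult}, $Gpi(A) = Dom \cap Codom$. Since $A$ is gendo-symmetric, $P = I$, the idempotents $e$ and $f$ are conjugate so that $Af \cong Ae$ as $B_2$-modules, and $F_1 = F_2 = (-)e$. Consequently the restricted equivalences of Theorem~\ref{ARSmaintheorem}(4), passed to the limit $i \to \infty$, read $F_1 : Codom \to {}^{\perp}(Ae)$ and $F_2 : Dom \to (Ae)^{\perp}$, with quasi-inverses $G_1$ and $G_2$ respectively.

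For the forward direction, I would take $M \in Gpi(A)$. Since $M \in Codom$, the equivalence $F_1$ yields $Me \in {}^{\perp}(Ae)$; since $M \in Dom$, the equivalence $F_2$ yields $Me \in (Ae)^{\perp}$. Applying the quasi-inverses we obtain $G_1(Me) \cong M$ (from $G_1 F_1 \cong id$ on $Codom$) and $G_2(Me) \cong M$, so $G_1(Me) \cong G_2(Me)$, placing $Me$ in $\mathcal{W}$. Conversely, for $X \in \mathcal{W}$ set $M := G_1(X) \cong G_2(X)$: then $X \in {}^{\perp}(Ae)$ forces $M = G_1(X) \in Codom$, and $X \in (Ae)^{\perp}$ forces $M = G_2(X) \in Dom$, so $M \in Dom \cap Codom = Gpi(A)$ and $Me = F_1 G_1(X) \cong X$. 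This exhibits $(-)e : Gpi(A) \to \mathcal{W}$ as essentially surjective, and it is fully faithful as a restriction of the equivalence $F_1$ on $Codom$; hence it is an equivalence, with quasi-inverse $G_1|_{\mathcal{W}}$.

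I do not expect any substantial obstacle: the statement is a formal reorganization of equivalences already in hand. The two delicate points are (i) identifying $Af$ with $Ae$ so that both Ext-perpendicular conditions live in $mod$-$eAe$, which is exactly the content of $I = P$ in the gendo-symmetric case, and (ii) recognizing that the clause $G_1(X) \cong G_2(X)$ in the definition of $\mathcal{W}$ is precisely the compatibility condition needed to glue the two equivalences $F_1, F_2$ along the intersection $Dom \cap Codom$, since although $F_1$ and $F_2$ agree as the functor $(-)e$, their inverses $G_1 = (-) \otimes_{eAe} eA$ and $G_2 = \Hom_{eAe}(eA,-)$ need not agree on an arbitrary $eAe$-module, and one must enforce agreement in order to lift back to a single module of infinite dominant and codominant dimension.
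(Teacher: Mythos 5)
Your proposal is correct and follows essentially the same route as the paper: identify $Gpi(A)$ with $Dom\cap Codom$ via Theorem~\ref{mainresult}, use the limiting case of Theorem~\ref{ARSmaintheorem}(4) to land $(-)e$ in ${}^{\perp}(Ae)\cap (Ae)^{\perp}$ with $G_1\cong G_2$ on the image, check full faithfulness by restriction, and get density from $(G_i(X))e\cong F_iG_i(X)\cong X$ together with $G_1(X)\cong G_2(X)\in Dom\cap Codom$. Your write-up is in fact somewhat more explicit than the paper's terse argument, but no new idea is involved.
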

\begin{proof}
Recall that $F_i$ is an equivalence with quasi-inverse $G_i$ as explained in \ref{ARSmaintheorem} for $i=1,2$. This makes it clear that $(-)e$ really maps $\Gpi(A)$ to $\mathcal{W}$. Since $(-)e$ is fully faithful on $\Dom(A)$, also the restiction to $\Gpi(A)$ is fully faithful. Now let $X \in \mathcal{W}$, then just note that $(G_i(X))e \cong F_i(G_i(X)) \cong X$ for $i=1,2$ and thus $(-)e$ is dense since the module $G_i(X)$ is in $\Gpi(A)$ by definition and (4) of \ref{ARSmaintheorem}.
\end{proof}

\begin{theorem}
\label{finitistic}
Let $A$ be a non-selfinjective gendo-symmetric algebra, which additionally is a weakly Gorenstein algebra.
\begin{enumerate}
\item
The following are equivalent for a non-injective and non-projective indecomposable module $M$:
\begin{enumerate}
\item[i)] $M$ has infinite dominant dimension.
\item[ii)] $M$ is Gorenstein-projective-injective and $\nu^{-1}(M) \cong M$.
\item[iii)] $M$ is Gorenstein-projective-injective.
\item[iv)]  $M$ has infinite codominant dimension.
\end{enumerate}
\item If $A$ is CM-finite and Gorenstein with Gorenstein dimension $g$, then every non-injective module has finite dominant dimension and $\fdomdim(A) \leq g+1$.
\end{enumerate}
\end{theorem}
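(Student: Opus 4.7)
The plan for part (1) is to route all four conditions through (iii). The equivalences (iii) $\Leftrightarrow$ (i) and (iii) $\Leftrightarrow$ (iv) come essentially from Theorem \ref{mainresult}: a Gorenstein projective-injective module has infinite dominant \emph{and} codominant dimension. For the reverse direction (i) $\Rightarrow$ (iii), I would extract from infinite dominant dimension, via Proposition \ref{domdimform}(1), the two facts $\nu^{-1}(M) \cong M$ and $Ext^i(D(A),M) = 0$ for all $i \geq 1$; being right nearly Gorenstein places $M$ in $D(A)^\perp = Gi(A)$, so $M$ is Gorenstein injective, and Proposition \ref{taugorenstein}(1) then forces $\nu^{-1}(M) \cong M$ to be Gorenstein projective as well. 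The dual argument handles (iv) $\Rightarrow$ (iii) using Proposition \ref{domdimform}(2), left near Gorensteinness, and the equivalence $\nu : Gp(A) \to Gi(A)$ of Proposition \ref{taugorenstein}(2). The equivalence (iii) $\Leftrightarrow$ (ii) is just Theorem \ref{mainresult} together with Proposition \ref{domdimform}(1), which supplies the isomorphism $\nu^{-1}(M) \cong M$ for free whenever the dominant dimension is at least two.

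For part (2), the first step is to rule out infinite dominant dimension for any noninjective indecomposable $M$. If $M$ is nonprojective, part (1) makes $M$ Gorenstein projective-injective, contradicting Corollary \ref{CorCM}(2) under the CM-finite hypothesis. If $M$ is projective and noninjective, then the argument of (i) $\Rightarrow$ (iii) still applies verbatim and shows $M$ is Gorenstein injective. Since $A$ is Gorenstein of dimension $g$, Proposition \ref{gorensteinkrit} gives $Gi(A) = \Omega^{-g}(A\text{-mod})$, so $M$ sits at the end of a coresolution $0 \to X \to I_0 \to \cdots \to I_{g-1} \to M \to 0$ with the $I_j$ injective. The surjection $I_{g-1} \to M$ splits because $M$ is projective, so $M$ is a direct summand of $I_{g-1}$, which forces $M$ to be injective --- a contradiction. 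This projective-noninjective case is where I expect the subtlest step: Corollary \ref{CorCM}(2) does not cover it directly, and handling it requires the splitting trick from Gorenstein homological algebra.

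Once infinite dominant dimension is excluded, the bound is immediate from Proposition \ref{domdimform}(1). For a noninjective $M$ with $d := domdim(M) \geq 2$ finite, one has $d - 1 = \inf\{i \geq 1 \mid Ext^i(D(A),M) \neq 0\}$; since $A$ is $g$-Gorenstein, the right module $D(A) = D({}_AA)$ has projective dimension equal to $injdim({}_AA) = g$, so $Ext^i(D(A),-)$ vanishes for $i > g$, yielding $d \leq g+1$. The cases $d \leq 1$ are trivial, and for a decomposable $M$ with finite dominant dimension some indecomposable summand must be noninjective, so $domdim(M)$ is bounded by the dominant dimension of that summand, completing the proof of $fdomdim(A) \leq g+1$.
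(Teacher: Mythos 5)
Your argument is correct and, for part (1), follows essentially the same route as the paper: (i) $\Rightarrow$ (iii) via Proposition \ref{domdimform}(1), right near-Gorensteinness and Proposition \ref{taugorenstein}(1), with Theorem \ref{mainresult} supplying the converses; your treatment of (iv) $\Rightarrow$ (iii) via left near-Gorensteinness and the equivalence $\nu:Gp(A)\to Gi(A)$ is a slightly cleaner dual than the paper's detour through $D(M)$ over the opposite algebra, but mathematically the same. The one genuine difference is in part (2). The paper passes directly from ``$A$ has no nonprojective Gorenstein projective-injective module'' (Corollary \ref{CorCM}) to ``no noninjective module has infinite dominant dimension'', which implicitly skips the case of a projective noninjective indecomposable of infinite dominant dimension, since part (1) is only stated for modules that are both nonprojective and noninjective. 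You close this case explicitly: such a module lies in $D(A)^{\perp}=Gi(A)=\Omega^{-g}(A\text{-mod})$, so the last map of the injective coresolution splits and exhibits $M$ as a summand of an injective module, a contradiction. This is a correct and worthwhile addition, and it is the only place where your proof does more than the paper's. One cosmetic slip at the end: a decomposable module of finite dominant dimension need not have a noninjective indecomposable summand (it could be a noninjective-free sum containing an injective nonprojective summand, which has dominant dimension $0$); but since injective indecomposables have dominant dimension $0$ or $\infty$, the bound $fdomdim(A)\leq g+1$ is unaffected.
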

\begin{proof}
\begin{enumerate}
\item $i) \Rightarrow ii)$: Assume $M$ has infinite dominant dimension. Since $A$ is a gendo-symmetric algebra, this is equivalent to the two conditions $\nu^{-1}(M)=\Hom_A(D(A),M) \cong M$ and $\Ext_A^{i}(D(A),M)=0$ for all $i\geq 1$. Since we assume that the subcategory of Gorenstein-injective modules coincides with $D(A)^{\perp}$, $M$ is Gorenstein-injective. Now by \ref{taugorenstein} (1), if a module $X$ is Gorenstein-injective, then $\Hom_A(D(A),X)$ is Gorenstein-projective. Using this with $X=M$, we get that $M \cong \Hom_A(D(A),M)$ is Gorenstein-projective. \newline
$ii) \Rightarrow i)$: Since $\nu^{-1}(M)=M$, $M$ has dominant dimension larger than or equal to 2. Because $M$ is Gorenstein-injective, $\Ext_A^{i}(D(A),M)=0$ for all $i\geq 1$. Then $\domdim M=\infty$ is clear. \newline
$ii) \Leftrightarrow iii)$: Since $A$ is gendo-symmetric, $A$ has dominant dimension at least two and using \ref{torsionless} (1), we see that all Gorenstein-projective modules have dominant dimension at least 2. But in a gendo-symmetric algebra having dominant dimension at least two is equivalent to $\nu^{-1}(M) \cong M$. \newline
$iv) \Leftrightarrow iii)$: Now $\codomdim M= \infty$ is equivalent to $\domdim D(M)=\infty$ and using the equivalence of $i)$ and $ii)$, this is equivalent to $\nu^{-1}(D(M)) \cong D(M)$ and $D(M)$ is Gorenstein-injective and Gorenstein-projective. But, since $\nu^{-1}=\Hom_A(-,A) \circ D$, this is equivalent to $\Hom_A(M,A) \cong D(M)$ and that $M$ is Gorenstein-projective and Gorenstein-injective.
Now $\nu^{-1}(M)=DD(M) \cong M$ and the equivalence of $iii)$ and $iv)$ is clear.
\item By \hyperref[CorCM]{ \ref*{CorCM}} , $A$ does not contain a non-projective Gorenstein-projective-injective module and thus no non-injective module of infinite dominant dimension.
Assume now $M$ is a module with $\infty>i=\domdim M > g+1$. Then $\Ext_A^{i-1}(D(A),M) \neq 0$ by \ref{domdimform}, contradicting the fact that $D(A)$ has projective dimension equal to $g$. 
\end{enumerate}

\end{proof}

The next example shows that the bound for the finitistic dominant dimension in the previous theorem is optimal:
\begin{example}
Let $A$ be the so called penny-farthing algebra (see \cite{GR}) with two simple modules given by quiver and relations as follows: 
$$
\begin{xy}
  \xymatrix{
      & {\bullet}^2 \ar@/ _1pc/[r]_{\beta_2}   & {\bullet}^1 \ar@(ur,dr)[]^{\alpha} \ar@/ _1pc/[l]_{\beta_1}}
\end{xy}
$$
The relations are: $I=<\alpha^2-\beta_1 \beta_2, \beta_2 \beta_1>.$ Thus $A=kQ/I$ and $A$ is a symmetric algebra.
Let $S_i$($e_i$) be the simple module (primitive idempotent) corresponding to the vertex $i$ and $J$ the radical of $A$. We show that the algebra $B:=\End_A(A \oplus S_2)$ has dominant dimension and Gorenstein dimension equal to 3 and finitistic dominant dimension equal to 4. Note that $B$ is a CM-finite weakly Gorenstein algebra using \ref{corollarymonrefin} (2), since the penny-farthing algebra is representation-finite. By explicitly giving the relevant minimal projective resolutions (see below) and using \ref{benson}, we will show that $\Ext_A^{1}(S_2,S_2)=0$ but $\Ext_A^{2}(S_2,S_2) \neq 0$ and $\Ext_A^{i}(S_2,e_2J^2)=0$ for $i=1$ and $i=2$, but $\Ext_A^{3}(S_2,e_2J^2) \neq 0$. Thus the gendo-symmetric algebra $B$ has dominant dimension 3 and the $B$-module $\Hom_A(A \oplus S_2, e_2 J^{2})$ has dominant dimension 4 by \hyperref[ARSmaintheorem]{ \ref*{ARSmaintheorem}} and \hyperref[benson]{ \ref*{benson}}. Because of $\tau(\Omega^{1}(S_2))=\Omega^3(S_2)=S_2$, one also conlcudes that $B$ has Gorenstein dimension 3 using \hyperref[theoremchekoe]{\ref*{theoremchekoe}}. The relevant minimal projetive resolutions of $S_2$ and $e_2J^{2}$ needed to proof the above statements are (note that both modules have period 3 and thus one can read off minimal projective and minimal injective resolutions):
$$
\begin{xy}
	\xymatrix@C-1pc@R-1pc{
	& e_2A\ar[rd]^{}   &         & e_2A \ar[rd]^{} &     & e_1A \ar[rd]^{}  &   & e_2A\ar[r] & S_2\ar[r]^{}  &  0 \\
  \cdots\ar[ru]^{} &     & S_2 \ar[ru]^{} &    & \beta_1 A \ar[ru]^{} &    & e_2J^{1}\ar[ru]^{} &  &  & }
\end{xy}
$$ \newline
$$
\begin{xy}
	\xymatrix@C-1pc@R-1pc{
	& e_1A \ar[rd]^{}   &         & e_2A \ar[rd]^{} &     & e_1A \ar[rd]^{}  &   & e_1A \ar[r] & e_2J^{2} \ar[r]^{}  &  0 \\
  \cdots\ar[ru]^{} &     & e_2J^{2} \ar[ru]^{} &    & \alpha \beta_1A \ar[ru]^{} &    & \alpha A \ar[ru]^{} &  &  & }
\end{xy}
$$

\end{example}

\begin{corollary}
\label{perpcor}
Let $A$ be a gendo-symmetric and weakly Gorenstein algebra with minimal faithful projective-injective module $eA$.
Then the functor $(-)e : \Gpi(A) \rightarrow eA^{\perp} \subseteq \mod-eAe$ is an equivalence of categories and $eA^{\perp} = ^{\perp}{Ae} = ^{\perp}{Ae} \cap {Ae}^{\perp}$.
\end{corollary}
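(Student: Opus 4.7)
The plan is to deduce the corollary from Theorem \ref{diagram}, using Theorem \ref{finitistic} and Theorem \ref{ARSmaintheorem}(4) to simplify the description of $\mathcal{W}$. We may assume $A$ is nonselfinjective, since otherwise every module is Gorenstein projective-injective and the statement is trivial.

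The key conceptual step will be to establish the chain of equalities $Dom(A) = Codom(A) = Gpi(A)$ as subcategories of $mod$-$A$. The inclusion $Gpi(A) \subseteq Dom(A) \cap Codom(A)$ is immediate from Theorem \ref{mainresult}. For the converse, pick an indecomposable $M$ with $domdim(M) = \infty$. Since $A$ is nonselfinjective and nearly Gorenstein, Proposition \ref{nakaconj} gives $domdim(A) < \infty$; consequently any indecomposable projective module outside $add(eA)$ has finite dominant dimension, and any indecomposable injective module outside $add(eA)$ has dominant dimension $0$. So either $M \in add(eA)$, which is automatically Gorenstein projective-injective, or $M$ is noninjective and nonprojective, in which case Theorem \ref{finitistic}(1) places $M$ in $Gpi(A)$. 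The dual argument applied to $A^{op}$ (again nonselfinjective, gendo-symmetric, and nearly Gorenstein by definition) yields $Codom(A) = Gpi(A)$.

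Next I would invoke Theorem \ref{ARSmaintheorem}(4) at $i = \infty$. In the gendo-symmetric setting $F_1 = F_2 = (-)e$ and $Af = Ae$, so the theorem produces equivalences
\[
(-)e : Dom(A) \longrightarrow (Ae)^{\perp}, \qquad (-)e : Codom(A) \longrightarrow {}^{\perp}(Ae),
\]
of subcategories of $mod$-$eAe$. Since the two source categories are equal to $Gpi(A)$ by the previous step, the two target categories consist of exactly the essential image of $Gpi(A)$ under $(-)e$. Therefore $(Ae)^{\perp} = {}^{\perp}(Ae)$, and in particular $(Ae)^{\perp} \cap {}^{\perp}(Ae) = (Ae)^{\perp}$.

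Finally, I would identify $\mathcal{W}$ from Theorem \ref{diagram} with this common subcategory: for any $X \in (Ae)^{\perp} = {}^{\perp}(Ae)$, there is an essentially unique $M \in Gpi(A)$ with $Me \cong X$, and both quasi-inverses $G_1$ and $G_2$ from Theorem \ref{ARSmaintheorem}(3) necessarily recover this same $M$, so the defining condition $G_1(X) \cong G_2(X)$ of $\mathcal{W}$ holds automatically. Hence $\mathcal{W} = (Ae)^{\perp} = {}^{\perp}(Ae)$, and the equivalence of Theorem \ref{diagram} refines to $(-)e : Gpi(A) \to (Ae)^{\perp}$ as claimed. The main obstacle is the case analysis in $Dom(A) = Gpi(A)$: once one verifies that indecomposable projective and injective modules outside $add(eA)$ cannot contribute to $Dom_{\infty}$ or $Codom_{\infty}$, Theorem \ref{finitistic}(1) disposes of the remaining noninjective nonprojective case.
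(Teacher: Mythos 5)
Your overall strategy is the one the paper intends: the printed proof is literally ``combine Theorem \ref{diagram} with Theorem \ref{finitistic}'', and your fleshed-out version --- identify $Gpi(A)$ with $Dom_{\infty}=Codom_{\infty}$, push this through the two equivalences of Theorem \ref{ARSmaintheorem}(4) to get $(Ae)^{\perp}={}^{\perp}(Ae)$ as the common essential image under $(-)e$, and then observe that the condition $G_1(X)\cong G_2(X)$ defining $\mathcal{W}$ holds automatically by full faithfulness of $(-)e$ on $Dom_2$ --- is exactly the content the paper leaves implicit. Those last two steps of yours are correct.

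However, one step in your case analysis rests on a non sequitur. You claim that since $domdim(A)<\infty$ (Proposition \ref{nakaconj}), ``any indecomposable projective module outside $add(eA)$ has finite dominant dimension.'' The dominant dimension of $A$ is the \emph{minimum} of the dominant dimensions of the indecomposable projectives, so its finiteness only guarantees that \emph{some} non-injective indecomposable projective has finite dominant dimension, not all of them; whether a non-injective indecomposable projective over a gendo-symmetric algebra can have infinite dominant dimension is essentially a Tachikawa-type question and cannot be settled by this counting argument. Since your identification $Dom_{\infty}=Gpi(A)$ hinges on excluding exactly such a projective, this is a genuine gap. It is repairable, but only by invoking the nearly Gorenstein hypothesis at this point: if $P$ is indecomposable projective with $domdim(P)=\infty$, then Proposition \ref{domdimform} gives $\nu^{-1}(P)\cong P$ and $Ext^{i}(D(A),P)=0$ for all $i\geq 1$, so $P\in D(A)^{\perp}=Gi(A)$ because $A$ is right nearly Gorenstein; Proposition \ref{taugorenstein}(1) then yields $P\cong\nu\nu^{-1}(P)\cong\nu(P)=DHom_A(P,A)$, which is injective since $P$ is projective, whence $P\in add(eA)$ after all. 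With this repair (and its dual for $Codom_{\infty}$), the remaining cases --- injective indecomposables of nonzero dominant dimension are projective, and Theorem \ref{finitistic}(1) covers the non-projective non-injective ones --- go through as you wrote them.
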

\begin{proof}
This follows from \hyperref[diagram]{ \ref*{diagram}} and \hyperref[finitistic]{ \ref*{finitistic}}. 
\end{proof}

\begin{comment}
\begin{proposition}
Assume that $A$ is a symmetric algebra and $M$ a generator of $A$-mod and let $B:=\End_A(M)$. Assume that there is an equivalence $H:^{\perp}M \cong M^{\perp}$, then there is an equivalence of categories $G_1HF_2: \Dom(A) \rightarrow \Codom(A)$ with inverse $G_2HF_1: \Codom(A) \rightarrow \Dom(A)$.
\end{proposition}
\begin{proof}
This follows with the diagram 1, using $M \cong D(eA) \cong Ae$ as $eAe$-modules. 
\end{proof}
\end{comment}

The following proposition allows us to check whether a module over a gendo-symmetric Gorenstein algebra is Gorenstein-projective-injective by calculating only finitely many terms in a projective or injective minimal resolution of this module. 
\begin{proposition}
Let $A$ be a gendo-symmetric algebra of Gorenstein dimension $g$. Then for a module $M$ the following are equivalent:
\begin{enumerate}
\item $M$ is Gorenstein-projective-injective.
\item $\domdim M+\codomdim M \geq 2g$.
\item $\domdim M \geq 2g$.
\item $\codomdim M \geq 2g$.

\end{enumerate}
\end{proposition}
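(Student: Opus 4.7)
The plan is to prove $(1) \Rightarrow (2), (3), (4)$ trivially from Theorem \ref{mainresult}, then $(3) \Rightarrow (1)$ and $(4) \Rightarrow (1)$ by a direct $\Ext$-vanishing argument, and finally to reduce $(2) \Rightarrow (1)$ to the previous implications together with a bounding step. Before starting I would record two preliminary observations. First, a Gorenstein algebra is nearly Gorenstein by Example \ref{gorexamp}, so Theorem \ref{finitistic}(1) applies: $M \in Gpi(A)$ if and only if $\mathrm{domdim}(M) = \infty$ if and only if $\mathrm{codomdim}(M) = \infty$. Second, $g \geq 2$: if $g = 1$, the injective resolution $0 \to A \to I_0 \to I_1 \to 0$ combined with $\mathrm{domdim}(A) \geq 2$ (which holds since $A$ is gendo-symmetric) would force both $I_0$ and $I_1$ to be projective, giving $\mathrm{domdim}(A) = \infty$ and contradicting nonselfinjectivity via the Nakayama conjecture (Proposition \ref{nakaconj}).

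For $(1) \Rightarrow (2), (3), (4)$: by Theorem \ref{mainresult}, $M \in Gpi(A)$ yields $\mathrm{domdim}(M) = \mathrm{codomdim}(M) = \infty$, so all three inequalities hold trivially. For $(3) \Rightarrow (1)$: the assumption $\mathrm{domdim}(M) \geq 2g \geq 2$ combined with Proposition \ref{domdimform}(1) gives $\nu^{-1}(M) \cong M$ and $\Ext^{i}(D(A), M) = 0$ for $1 \leq i \leq 2g - 2$. Since $A$ is $g$-Gorenstein, the right projective dimension of $D(A)$ equals $g$, so $\Ext^{i}(D(A), M) = 0$ for all $i > g$ as well; combined with $g \leq 2g - 2$ (from $g \geq 2$), this covers every $i \geq 1$. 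Hence $\mathrm{domdim}(M) = \infty$ by Proposition \ref{domdimform}(1), and $M \in Gpi(A)$ by Theorem \ref{finitistic}(1). The implication $(4) \Rightarrow (1)$ is entirely dual, using $\mathrm{injdim}(A) = g$ to conclude $\Ext^{i}(M, A) = 0$ for $i > g$.

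Finally, for $(2) \Rightarrow (1)$: if $\mathrm{domdim}(M) = \infty$ or $\mathrm{codomdim}(M) = \infty$, Theorem \ref{finitistic}(1) immediately places $M$ in $Gpi(A)$. Otherwise both dimensions are finite, and by the same Ext analysis each is bounded above by $g + 1$, so the sum is at most $2g + 2$; the hypothesis then confines the pair $(\mathrm{domdim}(M), \mathrm{codomdim}(M))$ to a narrow range. To rule out this remaining case, I would invoke Proposition \ref{tauomega}, which identifies $\tau^{\pm 1}(M)$ with $\Omega^{\pm 2}(M)$ whenever the relevant dimension is at least $2$, and iterate these identifications while tracking how dominant and codominant dimensions behave under $\Omega^{\pm 2}$, combining with $Gp(A) = \Omega^{g}(\mathrm{mod}\text{-}A)$ from Proposition \ref{gorensteinkrit}, so as to force simultaneous vanishing of all $\Ext^{i}(D(A), M)$ and $\Ext^{i}(M, A)$. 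The main obstacle is making this iteration precise: concretely, it amounts to proving the sharp bound $\mathrm{domdim}(M) + \mathrm{codomdim}(M) \leq 2g - 1$ for every non-Gorenstein projective-injective module whose two dimensions are both finite, thereby excluding the ranges permitted by $(2)$.
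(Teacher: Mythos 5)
Your handling of $(1)\Rightarrow(2),(3),(4)$ and of $(3)\Rightarrow(1)$, $(4)\Rightarrow(1)$ is correct, and the direct route you take for the latter two (Proposition \ref{domdimform} plus the bound $\operatorname{projdim}D(A)=g$ and the observation that $g\geq 2$) is a legitimate alternative to the paper, which instead funnels everything through condition $(2)$. The problem is $(2)\Rightarrow(1)$, which is the heart of the proposition since $(2)$ is the weakest of the three numerical conditions. In the case where $i:=domdim(M)$ and $j:=codomdim(M)$ are both finite with $i+j\geq 2g$, you reduce the claim to the bound $i+j\leq 2g-1$ for every module that is not Gorenstein projective-injective and then concede that ``the main obstacle is making this iteration precise.'' That bound is not something you can quote or easily extract from Proposition \ref{tauomega}; it is exactly the contrapositive of the remaining content of $(2)\Rightarrow(1)$, so the reduction is circular and the sketched iteration of $\tau^{\pm1}\cong\Omega^{\pm2}$ is never carried out. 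This is a genuine gap.

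The paper closes this case by a window-shifting argument that needs neither near-Gorensteinness nor Theorem \ref{finitistic}. Assume without loss of generality $i\geq j$, so $i\geq g$. The first $i$ terms of the minimal injective coresolution of $M$ are projective-injective, so read backwards this coresolution is a projective resolution of $\Omega^{-i}(M)$; hence $W:=\Omega^{g-i}(M)\cong\Omega^{g}(\Omega^{-i}(M))$ lies in $\Omega^{g}(\operatorname{mod}\text{-}A)=Gp(A)$ by \ref{gorensteinkrit}. Dually, since $2g-i\leq j$, the first $j$ terms of the minimal projective resolution of $M$ are injective, and $W\cong\Omega^{-g}(\Omega^{2g-i}(M))$ lies in $\Omega^{-g}(\operatorname{mod}\text{-}A)=Gi(A)$. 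Thus $W$ is Gorenstein projective-injective, hence has infinite dominant and codominant dimension by \ref{mainresult}, and the same then holds for $M=\Omega^{i-g}(W)$, so $M\in Gpi(A)$. You should replace your final paragraph with an argument of this kind. A smaller point: Theorem \ref{finitistic}(1) is stated only for indecomposable modules that are neither projective nor injective, so when you invoke it you should dispose of projective non-injective summands separately --- such a summand has codominant dimension $0$, and it cannot have infinite dominant dimension either, since a projective module $P$ lying in $D(A)^{\perp}=Gi(A)$ with $\nu^{-1}(P)\cong P$ satisfies $P\cong\nu(P)$ and is therefore injective.
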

\begin{proof}
The proposition is clear when $A$ is selfinjective and thus we assume that $A$ is non-selfinjective in the following.
$(1) \Rightarrow (2):$ \newline
We saw in \hyperref[mainresult]{ \ref*{mainresult}}  that a module $M \in \Gpi(A)$ has infinite dominant and codominant dimension, thus $\domdim M+\codomdim M \geq 2g$. \newline
$(2) \Rightarrow (1):$ \newline
Assume $\domdim M=i$ and $\codomdim M=j$, and $i+j \geq 2g$. Without loss of generality, assume that $i \geq j$. Then the module $W=\Omega^{-i+g}(M)$ can be written $W=\Omega^{g}(\Omega^{-i}(M))$ and $W=\Omega^{-g}(\Omega^{s}(M))$ for some $s \leq j$. Thus by the characterisation of Gorenstein algebras \hyperref[gorensteinkrit]{ \ref*{gorensteinkrit}}, $W$ is Gorenstein-projective-injective and so is every syzygy of $W$ and thus also $M=\Omega^{-k}(W)$. \newline
$(1) \Rightarrow (3):$ \newline
This is clear since a Gorenstein-projective-injective module has infinite dominant dimension in a gendo-symmetric algebra by \ref{mainresult}. \newline
$(3) \Rightarrow (2):$ \newline
This is clear. \newline
$(4) \Leftrightarrow (1) $ is dual to $3. \Leftrightarrow 1.$ 
\end{proof}

We now give a class of examples of non-selfinjective gendo-symmetric algebras containing Auslander-Reiten compontents of Gorenstein-projective-injective modules.

\begin{proposition}
Let $B$ be a symmetric algebra and $M:=B \oplus W$, where $W$ is a nonzero 2-periodic module.
\begin{enumerate}
\item The gendo-symmetric algebra $A:=\End_B(M)$ has dominant dimension 2 and Gorenstein dimension 2.
\item $^{\perp}M= M^{\perp}$
\item $M^{\perp}=\{X \in \mod-B | \Ext_B^{i}(M,X)=0$, for $i=1,2 \}$.
\item If $W$ is even 1-periodic, then  $M^{\perp}=\{X \in \mod-B | \Ext_B^{1}(M,X)=0 \} \}$.
\end{enumerate}
\end{proposition}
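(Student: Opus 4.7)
The plan is to leverage two facts jointly: that $B$ is a projective-injective summand of $M$, and that $W$ is $2$-periodic. First I would record the reductions $\Ext^i_B(M,X) \cong \Ext^i_B(W,X)$ and $\Ext^i_B(X,M) \cong \Ext^i_B(X,W)$ for $i \geq 1$ (since $B$ is projective-injective). Combined with $\Ext^i(W,X) \cong \underline{\Hom}(\Omega^i W, X)$ and $\Ext^i(X,W) \cong \underline{\Hom}(X, \Omega^{-i} W)$ from Proposition~\ref{extrechnen}(2), the relation $\Omega^2 W \cong W$ forces every such $\Ext$-group to depend only on the parity of $i$. This is the common engine behind all four parts.

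For part (1), I would apply Mueller's formula (the specialisation of Theorem~\ref{ARSmaintheorem}(4) noted after that theorem) to read off $\mathrm{domdim}(A) = \inf\{i \geq 1 : \Ext^i_B(M,M) \neq 0\} + 1$. A single computation via Proposition~\ref{extrechnen}(3) and $2$-periodicity gives $\Ext^1_B(W,W) \cong \underline{\Hom}(W, \Omega^2 W) \cong \underline{\End}(W)$, which is nonzero because any nonzero $2$-periodic module is nonprojective. Hence $\mathrm{domdim}(A) = 2$. For the Gorenstein dimension I would apply Theorem~\ref{theoremchekoe} with $z = 0$: using $\tau(W) \cong \Omega^2(W) \cong W$ (from Proposition~\ref{extrechnen}(1)) and $D(B) \cong B$ (symmetry of $B$), the summand $\tau_1(M) \oplus D(B)$ reduces to $W \oplus B \cong M$, so the $M$-resolution dimension is zero and $\mathrm{injdim}(A_A) = 2$. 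The same argument with $\tau^{-1}(W) \cong \Omega^{-2}(W) \cong W$ on the left yields $\mathrm{injdim}({}_A A) = 2$, making $A$ a $2$-Gorenstein algebra.

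For part (2), the reduction gives $X \in M^{\perp}$ iff $\underline{\Hom}(W,X) = 0 = \underline{\Hom}(\Omega W, X)$ and $X \in {}^{\perp} M$ iff $\underline{\Hom}(X,W) = 0 = \underline{\Hom}(X, \Omega W)$; these two pairs are interchanged by Proposition~\ref{extrechnen}(4) via $\underline{\Hom}(W,X) \cong \underline{\Hom}(X, \Omega W)$ and $\underline{\Hom}(\Omega W, X) \cong \underline{\Hom}(X, \Omega^2 W) = \underline{\Hom}(X,W)$. Parts (3) and (4) are then consequences of the same parity collapse: for $W$ only $2$-periodic, every $\Ext^i_B(M,X)$ coincides up to stable isomorphism with either $\Ext^1_B(M,X)$ or $\Ext^2_B(M,X)$, so their simultaneous vanishing forces all higher $\Ext$'s to vanish; if $W$ is $1$-periodic then all $\Omega^i W$ are stably isomorphic to $W$ and a single $\Ext^1$-vanishing suffices. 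The main obstacle is purely bookkeeping: making sure that the Auslander-Reiten formulas from Proposition~\ref{extrechnen} are applied with the correct sign of $\Omega$ at each step, and that the stable isomorphism $\Omega^2 W \cong W$ can legitimately be inverted (which it can, since a nonzero periodic module carries no projective summand).
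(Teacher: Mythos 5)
Your proof is correct, and for parts (1), (3) and (4) it follows the paper's argument essentially verbatim: Mueller's theorem plus $\Ext^1(W,W)\cong\underline{\Hom}(W,\Omega^2 W)\cong\underline{\Hom}(W,W)\neq 0$ for the dominant dimension, Theorem \ref{theoremchekoe} with $\tau(M)\oplus D(B)\cong W\oplus B\in\add(M)$ for the Gorenstein dimension, and the parity collapse $\Ext^i(M,X)\cong\underline{\Hom}(\Omega^i W,X)\cong\Ext^p(M,X)$ for $p\in\{1,2\}$, $i\equiv p \pmod 2$.

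The one genuine divergence is part (2). The paper deduces ${}^{\perp}M=M^{\perp}$ from Corollary \ref{perpcor}, i.e.\ from the structural machinery for gendo-symmetric nearly Gorenstein algebras (using that $A$ is Gorenstein by part (1), hence nearly Gorenstein, and translating ${}^{\perp}(Ae)$ and $(eA)^{\perp}$ back to $\bmod$-$B$). You instead compute directly in the stable category: $X\in M^{\perp}$ iff $\underline{\Hom}(W,X)=\underline{\Hom}(\Omega W,X)=0$, $X\in{}^{\perp}M$ iff $\underline{\Hom}(X,W)=\underline{\Hom}(X,\Omega W)=0$, and Proposition \ref{extrechnen}(4) together with $\Omega^2 W\cong W$ interchanges the two pairs. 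This is exactly the ``alternative direct proof'' the paper itself records in the remark immediately following the proposition, and your version of it is if anything tidier than the remark's chain of isomorphisms. What the paper's route buys is a free illustration of the general theory (and of the equivalence $(-)e:Gpi(A)\to\mathcal{W}$); what your route buys is independence from the nearly-Gorenstein apparatus and from part (1). One small point worth making explicit when writing this up: a nonzero $l$-periodic module has no nonzero projective direct summand (otherwise $\Omega^l W\not\cong W$), which is what guarantees both $\underline{\Hom}(W,W)\neq 0$ and the invertibility of $\Omega$ on $W$ that you use throughout.
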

\begin{proof}
\begin{enumerate}
\item We use  \hyperref[extrechnen]{ \ref*{extrechnen}}: $\Ext_B^{1}(M,M) \cong \underline{\Hom}_B(M,\Omega^{2}(M)) \cong \underline{\Hom}_B(M,M) \neq 0$, since the identity does not factor over the projectives. Thus $A$ has dominant dimension 2 by Mueller's theorem \ref{mueller}. Now we use \hyperref[theoremchekoe]{ \ref*{theoremchekoe}} to calculate the Gorenstein dimension. But the right $\add(M)$-approximitation of $\tau(M) \cong \Omega^{2}(M) \cong M$ is an isomorphism $f:\Omega^{2}(M) \rightarrow M$, and thus the length of a resolution is 0 and so the right Gorenstein dimension is also equal to two. Similar the left Gorenstein dimension is calculated to be 2.
\item This follows by \hyperref[perpcor]{ \ref*{perpcor}}.
\item We can write every natural number $i$ as $i=2r+p,$ for $r \geq 0$ and $p \in \{1,2\}$. Then $\Ext_B^{i}(M,X) \cong \underline{\Hom}_B(\Omega^{i}(M),X) \cong \underline{\Hom}_B(\Omega^{p}(\Omega^{2r}(M),X) \cong \Ext_B^{p}(M,X)$, since $M$ is 2-periodic.
\item In this case $\Ext_B^{i}(M,X) \cong \underline{\Hom}_B(\Omega^{i}(M),X) \cong \underline{\Hom}_B(\Omega^{1}(M),X) \cong \Ext_B^{1}(M,X) $ for every $i \geq 1.$
\end{enumerate}
\end{proof}

\begin{remark}
In a tame algebra, all but finitely many indecomposable modules $M$ of a given dimension have the property that $\tau(M) \cong M$, by a famous result of Crawley-Boevey, see \cite{Cra}. Thus with the previous proposition one might construct alot of examples, since in a tame symmetric algebra $\tau(M) \cong \Omega^{2}(M) \cong M$ then holds  for all but finitely many indecomposable modules of a given dimension.
\end{remark}

\begin{example}
Assume $K$ is an infinite field of characteristic 2. \newline
Let $A$ be the symmetric algebra $K[x,y]/(x^2,y^2,xy-yx)$ (which is isomorphic to the group algebra of the Klein four group).
We use the notation and results as in Example 10.7 in page 417 of \cite{SkoYam}.
There $M(a,b)$ is defined as the module $A/(ax+by)A$, for $a,b$ nonzero elements of $K$.
Note that $M(a,b)$ is isomorphic to $M(c,d)$ for nonzero $c,d$ if and only if there is an $l \in k$ with $c=l \cdot a$ and $d=l \cdot b$.
Then $\Omega^{1}(M(a,b))=(ax+by)A \cong M(-a,b)$ (note this holds despite the fact that they excluded $\lambda=1$ in \cite{SkoYam}).
But $M(-a,b)=M(a,b)$ since we assume that the field has characteristic 2 and thus $M(a,b)$ is 1 periodic and we can apply the previous proposition.
We now want to calculate $\Ext_A^{1}(M(a,b),M(c,d))$, for nonzero $c$ and $d$ such that $M(c,d)$ is not isomorphic to $M(a,b)$.
For this we use the short exact sequence $0 \rightarrow M(a,b) \rightarrow A \rightarrow M(a,b) \rightarrow 0$ and apply the functor $\Hom_A(-,M(c,d))$ to get the long exact sequence: \newline
$0 \rightarrow \Hom_A(M(a,b),M(c,d)) \rightarrow \Hom_A(A,M(c,d)) \rightarrow \Hom_A(M(a,b),M(c,d)) \newline \rightarrow \Ext_A^{1}(M(a,b),M(c,d)) \rightarrow \cdots $. \newline
Now we see that $\Ext_A^{1}(M(a,b),M(c,d))=0$ if and only if \newline $0 \rightarrow \Hom_A(M(a,b),M(c,d)) \rightarrow \Hom_A(A,M(c,d)) \rightarrow \Hom_A(M(a,b),M(c,d)) \rightarrow 0$ is a short exact sequence, which is the case if and only if $2=\dim(M(c,d))=\dim(\Hom_A(A,M(c,d)))=2 \dim(\Hom_A(M(a,b),M(c,d))).$
But the last equation is true because $\Hom_A(M(a,b),M(c,d))=1$, since up to multiplication by a scalar the only homomorphism is the projection from the top into the socle (note that both modules have dimension $2$). \newline Thus $\Ext_A^{1}(M(a,b),M(c,d))=0$ and $\Ext_A^{k}(M(a,b),M(c,d))=0$ for all $k \geq 1$ since \newline $\Ext_A^{k}(M(a,b),M(c,d))=\Ext_A^{1}(\Omega^{k-1}(M(a,b)),M(c,d))=\Ext_A^{1}(M(a,b),M(c,d))=0$.
Then $B:=\End_A(A\oplus M(a,b))$ is a Gorenstein and gendo-symmetric algebra.
Thus by the previous proposition, the algebra $B$ has $\Hom_A(A \oplus M(a,b),M(c,d))$ as a Gorenstein-projective-injective module and by \hyperref[CorCM]{ \ref*{CorCM}} it has a whole Auslander-Reiten component consisting of Gorenstein-projective-injective modules.

\end{example}

We note that most of the previous results are special to gendo-symmetric algebras as the following example shows:

\begin{example}
Take the CNakayama algebra $A$ with Kupisch series $(3s+1,3s+2,3s+2), s \geq 1$ ($A$ is not gendo-symmetric). The dominant dimension and the Gorenstein dimension of $A$ are both 2.
The finitistic dominant dimension equals 4, while the finitistic dimension equals the Gorenstein dimension which is 2 (note that by  \hyperref[finitistic]{ \ref*{finitistic}}  (2), this can not happen for gendo-symmetic CM-finite Gorenstein algebras).
The modules of the form $e_1 A/e_1 J^k$ with $k \equiv 0$ are exactly the Gorenstein-projective-injective modules but those modules have dominant dimensions equal to 2 and not infinite when they are not projective-injective. There is also no Auslander-Reiten component consisting only of Gorenstein-projective-injective modules despite the existence of a non-projective-injective Gorenstein-projective-injective indecomposable module.

\end{example}

\section{The Liu-Schulz algebra and non-weakly Gorenstein algebras}
We first need the following result in order to calculate the codominant dimension of algebras:
\begin{proposition} \label{codomdimprop}
Let $A$ be a selfinjective algebra and $M=A \oplus X$ a generator with $X$ having no projective direct summands. Let $B:=\End_A(M)$ and $Y:=\Hom_A(M,N)$ for an $A$-module $N$.
\begin{enumerate}
\item The codominant dimension of the $B$-module $Y$ equals $\inf \{ i \geq 1 | \Ext_A^{1}(M,\Omega^{i}(N)) \neq 0 \} -1.$
\item $Y$ has infinite dominant and infinite codominant dimension if and only if $\Ext_A^{1}(M, \Omega^{i}(N))=0$ for all $i \in \mathbb{Z}$.
\end{enumerate}
\end{proposition}
\begin{proof}
Note that minimal $\add(M)$-approximations of an $A$-module $N$ correspond to the projective cover of the $B$-module $\Hom_A(M,N)$ by applying the functor $\Hom_A(M,-)$. Let $R$ be the direct sum of all projective-injective indecomposable $B$-modules. Then one has $\add(R)=\add(\Hom_A(M,A))$. Thus the codominant dimension of the module $Y$ is at least one iff its projective cover is in $\add(R)$ iff the minimal $\add(M)$-approximation of $N$ is equal to the projective cover of $N$ in $\mod-A$.
Let $f: P \rightarrow N$ be the projective cover of $N$ in $\mod-A$. Then this is a minimal $\add(M)$-approximation if and only if $\Hom_A(M,f)$ is surjective.
Applying to the short exact sequence $0 \rightarrow \Omega^1(N) \rightarrow P \rightarrow N \rightarrow 0$ the functor $\Hom_A(M,-)$ we get the exact sequence:
$0 \rightarrow \Hom_A(M,\Omega^1(N)) \rightarrow \Hom_A(M,P) \rightarrow \Hom_A(M,N) \rightarrow \Ext_A^1(M,\Omega^1(N)) \rightarrow 0$.
This gives us that the projective cover is the minimal $\add(M)$-approximation if and only if $\Ext_A^1(M,\Omega^1(N))=0$. We obtain (1) by applying the same argument to $\Omega^l(N)$ for $l \geq 1$.
(2) is a consequence of (1) in combination with Mueller's theorem \ref{mueller}.
\end{proof}

We can now prove the main theorem of this section.
\begin{theorem} \label{maintheorem}
Let $A$ be a symmetric algebra and $X$ a direct sum of indecomposable non-projective modules. Let $M$ be an indecomposable module such that $\Ext_A^{l}(X,M) \neq 0$ for some $l \geq 1$ and $\Ext_A^i(X,M)=0$ for all $i \geq l+1$. 
Then the gendo-symmetric algebra $B:=\End_A(A \oplus X)$ is not weakly Gorenstein and has infinite finitistic codominant dimension.
\end{theorem}
\begin{proof}
Let $N:=A \oplus X$ and $B:=\End_A(N)$. Then $B$ is by definition gendo-symmetric.
We now look at the $B$-module $R:=\Hom_A(N,\Omega^{-l}(M))$. Since we have $\Ext_A^i(X,\Omega^{-l}(M))=\Ext_A^{i+l}(X,M)=0$ for all $i \geq 1$, by \ref{mueller} the module $R$ has infinite dominant dimension.
Now using \ref{codomdimprop}, $R$ has codominant dimension zero since $\Ext_A^1(X,\Omega^1(\Omega^{-l}(M)))=\Ext_A^1(X,\Omega^{-(l-1)}(M))=\Ext_A^{l}(X,M) \neq 0$.
Now one has by \ref{domdimform} (2) that $R$ is a semi-Gorenstein-injective module, since modules $W$ with infinite dominant dimension over gendo-symmetric algebras have $\Ext_A^i(D(A),W)=0$ for all $i \geq 1$. By (1) of \ref{torsionless} it can not be Gorenstein-injective since it has codominant dimension zero, while all Gorenstein-injective modules have codominant dimension at least two since they are second cosyzygy modules and $A$ has dominant dimension at least two.
Thus $B$ is not weakly Gorenstein, as it contains a module that is semi-Gorenstein-injective but not Gorenstein-injective.
Now look at the $B$-modules $\Omega^{-i}(R)$ for $i \geq 1$. Since $R$ has infinite dominant dimension and codominant dimension zero, those modules have codominant dimension equal to $i$. Thus the algebra $B$ has infinite finitistic codominant dimension.
\end{proof}
We remark that in the proof of the previous theorem we constructed a semi-Gorenstein-injective module $R$ over an algebra $B$ that is not Gorenstein-injective. To obtain semi-Gorenstein-projective modules that are not Gorenstein-projective, one can just look at $D(R)$ over the opposite algebra of $B$.
We now construct non-local algebras with semi-Gorenstein-injective modules that are not Gorenstein-injective with an arbitrary large number of simple modules.
Let $A$ be the Liu-Schulz algebra, see for example \cite{Rin4} for an article about this algebra.
That is $A=A_r:=K<x,y,z>/(x^2,y^2,z^2,yx+rxy,zy+ryz,xz+rzx>$ for some nonzero field element $r$. We often write short $A$ instead of $A_r$. For non-zero field elements $c$, we define the right $A$-modules $M_c:=A/(x+c y)A$. In the following $c,d,e,f$ will denote field elements. We assume that $r^2 \neq 1$ and $r^3 \neq 1$ for simplicity in the following.

We collect some results for this algebra and those modules, where we refer to \cite{Rin4} for proofs:
\begin{proposition}
Let $A$ be the Liu-Schulz algebra, then
\begin{enumerate}
\item $A$ is a local 8-dimensional symmetric algebra.
\item $M_c$ is 4-dimensional and $M_c$ is isomorphic to $M_d$ if and only if $c=d$.

\end{enumerate}
\end{proposition}
Note that $\Omega^{1}(M_c)=(x+cy)A \cong M_{cr}$.

\begin{lemma} \label{mainlemma}
Let $A$ be the Liu-Schulz algebra, then
\begin{enumerate}
\item $\dim(\Hom_A(M_c,M_e))=2+\delta_{c,e}+\delta_{c r^2,e}$, where $\delta_{s,t}$ is the Kroenecker delta.
\item $\Ext_A^{1}(M_c,M_d)=0$ if and only if $c \neq d, cr \neq d, cr^2 \neq d$ and $cr^3 \neq d$.
\item $\Ext_A^{i}(M_c,M_d)=0$ for all $i \geq 1$ if and only if $d \neq r^l c$ for all $l \geq 0$.
\end{enumerate}
\end{lemma}
\begin{proof}
\begin{enumerate}
\item We calculate $dim(\Hom_A(A/(x+cy)A, (x+dy)A)))$, which gives the result by setting $e:=dr$ since $(x+dy)A \cong A/(x+dry)A$.
The elements in $\Hom_A(A/(x+cy)A, (x+dy)A))$ are of the form $l_z$, which is the homomorphism by right multplication by an element $z \in (x+dy)A$ with $z (x+cy)A$=0, which is equivalent to $z (x+cy)=0$.
Now a general $z \in (x+dy)A$ can be written as $z=a_1 (x+dy) + a_2 xy + a_3 xyz+a_4(xz+dyz)$ and the condition $z (x+cy)=0$ leads to $a_1(x+dy)(x+cy)+a_4 (xz+dyz)(x+cy)$. This is simplified to $a_1(cxy-rd xy)+a_4(-rcyzx+dyzx)=0$. Now this gives that we can choose $a_2$ and $a_3$ arbitrary and $a_1=0$ iff $e \neq c$ and $a_4=0$ iff $e \neq r^2 c$. This gives the result.

\item Look at the short exact sequence:
$$0 \rightarrow A/(x+cry)A \rightarrow A \rightarrow A/(x+cy)A \rightarrow 0$$
and apply the functor $\Hom_A(-,A/(x+dy)A)$ to obtain the exact sequence: \newline
\begin{tiny}$$0 \rightarrow \Hom_A(A/(x+cy)A,A/(x+dy)A) \rightarrow \Hom_A(A,A/(x+dy)A) \rightarrow \Hom_A(A/(x+cry)A,A/(x+dy)A) \rightarrow \Ext_A^{1}(A/(x+cy)A,A/(x+dy)A) \rightarrow 0.$$ \end{tiny}
Thus, $\dim(\Ext_A^{1}(A/(x+cy)A,A/(x+dy)A))=\dim(\Hom_A(A/(x+cy)A,A/(x+dy)A))+\dim(\Hom_A(A/(x+cry)A,A/(x+dy)A))-\dim(\Hom_A(A,A/(x+dy)A))= 2+\delta_{c,d}+\delta_{cr^2,d}+2+\delta_{cr,d}+\delta_{cr^3,d}-4$ by (1).
This gives $\dim(\Ext_A^{1}(A/(x+cy)A,A/(x+dy)A))=0$ if and only if $c \neq d, cr \neq d, cr^2 \neq d$ and $cr^3 \neq d$.
\item This follows directly by (2) using that $\Ext_A^{i}(M_c,M_d)=\Ext_A^{1}(\Omega^{i-1}(M_c),M_d)=\Ext_A^{1}(M_{r^{i-1}c},M_d)$.
\end{enumerate}
\end{proof}

\begin{theorem} \label{theoreminfinitefindomdim}
Let $K$ be a field with an element $r$ such that $r^l \neq 1$ for all $l \geq 1$ and let $c_1, ... , c_n$ be $n$ pairwise distinct field elements. Let $N:=A_r \oplus M_{c_1} \oplus ... \oplus M_{c_n}$.

\begin{enumerate}
\item $B$ is a gendo-symmetric algebra of dominant dimension equal to two with $n+1$ simple modules.

\item The algebra $B:=\End_{A_r}(N)$ has infinite finitistic codominant dimension and is not weakly Gorenstein.
\end{enumerate}
\end{theorem}
\begin{proof}
Set $A=A_r$.
\begin{enumerate}
\item $B$ is by definition gendo-symmetric, since $A_r$ is symmetric. By \ref{mainlemma}, $\Ext_A^{1}(M_{c_i},M_{c_i}) \neq 0$ and thus $B$ has dominant dimension equal to two by Mueller's theorem. Since the module $N$ is basic, the number of simple modules of $B$ equals the number of indecomposable summands of $N$.
\item We want to use \ref{maintheorem} to prove the result and show that the assumptions of \ref{maintheorem} are satisfied.
Choose $Z:=M_{c_i}$ with $i$ choosen so that for $d:=\frac{c_i}{r}$, $d \neq r^l c_j$ for any $l \geq 0$ and any $j$. Note that $M_d \cong \Omega^{-1}(M_{c_i})$.
Then we have $\Ext_A^{1}(N,Z) \neq 0$ since $\Ext_A^{1}(Z,Z) \neq 0$ by \ref{mainlemma} (2). But for $l \geq 2$ we have $\Ext_A^{l}(N,Z)=\Ext_A^{l-1}(N,\Omega^{-1}(Z))=\Ext_A^{l-1}(N,M_d)$=0 by \ref{mainlemma} (2), since $\Ext_A^{i}(M_{c_j},M_d)=0$ for any $i \geq 1$ and any $j$ because $d \neq r^i c_j$ for any $i \geq 0$ and any $j$.

\end{enumerate}
\end{proof}

\section{Open Questions}
We note that Liu-Schulz algebras are a special case of quantum complete intersection algebras $K<x_1,x_2,...,x_n>/(x_i^{a_i}, x_i x_j + q_{i,j} x_j x_i )$ for $i >j$, $a_i \geq 2$ natural numbers and $q_{i,j}$ non-zero field elements.
Those are local Frobenius algebras that are not always symmetric, see for example \cite{Op}.
One can construct non-weakly Gorenstein algebra from such algebras that are not necessarily Liu-Schulz algebras by finding modules $M$ with $\Ext^{l}(M,M) \neq 0$ and $\Ext^{l+i}(M,M)=0$ for $i \geq 1$ as we did in \ref{theoreminfinitefindomdim}.
We thus formulate some questions more generally for quantum complete intersection algebras. We call an indecomposable module $M$ $l$-special for some $l \geq 1$ if $\Ext^l(M,M) \neq 0$ and $\Ext^i(M,M)=0$ for all $i \geq l+1$.
\begin{enumerate}
\item Can one construct $l$-special modules over symmetric algebras over an arbitrary field? Can one construct non-weakly Gorenstein algebras over an arbitrary field? Note that in our construction we had to assume that there are field elements that are not roots of unity so our construction did not work for example for finite fields. 
\item Are there non-projective modules $M$ with $\Ext^{1}(M,M)=0$ over quantum complete intersection algebras? We remark that it seems to be an open problem in general to find non-projective modules $M$ over local Frobenius algebras with $\Ext^{1}(M,M)=0$. We refer to \cite{Mar3} for more on this.
\item Is a left weakly Gorenstein algebras also right weakly Gorenstein? We remark that at the end of the article \cite{RZ} it was shown that a positive answer to this question would imply the truth of the Gorenstein symmetry conjecture.
\end{enumerate}

\end{document}